\def\A{\mathcal{A}}
\def\B{\mathcal{B}}
\def\M{\mathcal{M}}
\def\BB{I\!\!B}
\def\NN{I\!\!N}
\def\RR{I\!\!R}
\theoremstyle{plain}
\newtheorem{thm}{Theorem}[section]
\newtheorem{cor}{Corollary}[section]
\newtheorem{prop}{Proposition}[section]
\theoremstyle{definition}
\newtheorem{assmp}{Assumption}[section]
\newtheorem{dfn}{Definition}[section]
\numberwithin{equation}{section}
\def\hat{\widehat}
\def\emp{\emptyset}
\def\ox{\overline{x}}
\def\oy{\overline{y}}
\def\disp{\displaystyle}
\def\Limsup{\mathop{{\rm Lim}\,{\rm sup}}}
\def\tto{\;{\lower 1pt\hbox{$\rightarrow$}}\kern-10pt
\hbox{\raise 2pt\hbox{$\rightarrow$}}\;}
\def\Hat{\widehat}
\def\Bar{\overline}
\def\ra{\rangle}
\def\la{\langle}
\def\R{\mathbb{R}}
\def\ox{\bar{x}}
\def\oy{\bar{y}}
\def\substack#1#2{{\scriptstyle{#1}\atop\scriptstyle{#2}}}
\def\dn{\downarrow}
\def\ph{\varphi}
\def\emp{\emptyset}
\def\st{\stackrel}
\def\oR{\Bar{\R}}
\def\al{\alpha}
\def\th{\theta}
\def\beq{\begin{equation}}
\def\eeq{\end{equation}}
\title{Subdifferentials of Value Functions in Nonconvex Dynamic Programming for Nonstationary Stochastic Processes}
\date{\today}
\author{Boris S.\,Mordukhovich\thanks{Research of this author was partly supported by the US National Science Foundation under grants DMS-1512846 and DMS-1808978, by the US Air Force Office of Scientific Research under grant \#15RT0462, and by the Australian Research Council Discovery Project DP-190100555.}\\
{\small Department of Mathematics, Wayne State University}\\
{\small Detroit, MI 48202, USA}\\
{\footnotesize e-mail: boris@math.wayne.edu} \and
\\
Nobusumi Sagara\thanks{Research of this author was partly supported by JSPS KAKENHI Grant Number JP18K01518 from the Ministry of Education, Culture, Sports, Science and Technology, Japan.} \\
{\small Department of Economics, Hosei University}\\
{\small 4342, Aihara, Machida, Tokyo, 194-0298, Japan}\\
{\footnotesize e-mail: nsagara@hosei.ac.jp}}
\begin{document}
\maketitle
\setcounter{page}{0}
\thispagestyle{empty}
\clearpage

\begin{abstract}
The main goal of this paper is to apply the machinery of variational analysis and generalized differentiation to study infinite horizon stochastic dynamic programming (DP) with discrete time in the Banach space setting without convexity assumptions. Unlike to standard stochastic DP with stationary Markov processes, we investigate here stochastic DP in $L^p$ spaces to deal with nonstationary stochastic processes, which describe a more flexible learning procedure for the decision-maker. Our main concern is to calculate generalized subgradients of the corresponding value function and to derive necessary conditions for optimality in terms of the stochastic Euler inclusion under appropriate Lipschitzian assumptions. The usage of the subdifferential formula for integral functionals on $L^p$ spaces allows us, in particular, to find verifiable conditions to ensure smoothness of the value function without any convexity and/or interiority assumptions.\\

\noindent
{\bfseries Key Words:} integral functionals, subdifferential, generalized Leibniz formulas, stochastic dynamic programming, nonstationary stochastic processes, value functions.\\

\noindent
{\bfseries 2010 Mathematics Subject Classification:} Primary 49J52, 49L20; Secondary 49J50, 91B62\\
\end{abstract}
\clearpage

\section{Introduction}
This paper aims at applying advanced tools of variational analysis and generalized differentiation to investigate infinite horizon stochastic dynamic programming (DP) models with discrete time in general  Banach spaces without conventional convexity assumptions. Unlike to standard stochastic DP with stationary Markov processes studied in \cite{ae87,bs78,ms18,rzs09,sl89}, we consider here stochastic DP in $L^p$ spaces defined on arbitrary Banach spaces to deal with {\em nonstationary stochastic processes} in order to design a more flexible learning procedure for the decision-maker. It is well known in the literature on optimal economic growth under uncertainty that the stationarity of stochastic processes and the convexity of technologies and preferences are indispensable conditions to characterize optimal stationary programs and to establish the turnpike property; see \cite{ae87,be81,da74,ev74,je74,ra73,zi76,ya89}. Since the (stochastic) turnpike property is beyond the scope of this paper, we assume neither stationarity nor convexity for our stochastic nonstationary DP model.

It has been fully understood in dynamic optimization that value functions (or marginal functions in another terminology) play a crucial role in characterizing optimality along with other important variational properties. Since value/marginal functions are generally nondifferentiable in standard senses, the usual way of applications of value functions to the study optimality is through calculating their appropriate subdifferentials (collections of subgradients), which is not a simple task in structural models that arise in applied areas. We proceed here in this way to derive necessary optimality conditions in terms of a stochastic Euler inclusion in nonstationary DP models with Lipschitzian data in Banach spaces.

To incorporate the nonstationarity of stochastic processes, we follow the probabilistic specification in \cite{pa94,ta92} to embed stochastic DP into deterministic DP in the extended Banach space setting. The approach to reduce stochastic DP to deterministic DP was also developed in \cite{ya89} for optimal economic growth models with finite-dimensional commodity spaces and stationary stochastic processes under smoothness assumptions. We provide now a general framework for deterministic DP based on our preceding publication \cite{ms18} to incorporate infinite-dimensional commodity spaces for possible economic applications. As well known, necessary optimality conditions in terms of the (stochastic) Euler inclusions for convex models amount to the existence of a support price system along the lines of \cite{mc86,pa94,we73}. Our nonconvex stochastic DP model is essentially more involved. The necessary optimality conditions derived below by employing subdifferentiation of integral functionals in $L^p$ give us, in particular, efficient conditions for smoothness (more precisely, strict differentiability) of the value function in the model under consideration without any convexity and interiority assumptions. The obtained conditions for smoothness significantly improve the previously known results in this directions, which have always been of strong interest in economic modeling; see \cite{alv98,bs79,blv96,rzs09} and the references therein.\vspace*{0.05in}

The rest of the paper is organized as follows. Section~2 presents the necessary background and preliminary results from variational analysis and generalized differentiation broadly used below. In Section~3 we describe a deterministic DP model in Banach spaces governed by a discrete-time dynamic system with Lipschitzian data and derive necessary conditions for optimal solutions under rather mild assumptions by employing subdifferential calculus. Section~4 is devoted to the nonstationary stochastic DP model of our main interest here and derives necessary optimality conditions for them by using subdifferentiation of integral functionals. We conclude the paper in Section~5 by formulating some open questions.

\section{Preliminaries from Variational Analysis}

We split this section into 2 subsections. The first one contains definitions of the major constructions of generalized differentiation in variational analysis used in the paper. The second subsection is devoted to evaluating subgradients for a general class of marginal/value functions.

\subsection{Derivatives and Subdifferentials}

We begin with the generalized differential constructions by Clarke \cite{cl83} for Lipschitz continuous functions on arbitrary Banach spaces. Let $(E,\|\cdot\|)$ be a Banach space with its dual $E^*$, and let $\langle\cdot,\cdot\rangle$ signifies the dual system $\langle\cdot,\cdot\rangle$ on $E^*\times E$. Given an extended-real-valued function $\phi\colon E\to\overline{\RR}:=(-\infty,\infty]$ that is locally Lipschitzian around $\ox$, recall first its {\em generalized directional derivative} at $\ox$ in the direction $h\in E$ defined by
\begin{equation}\label{dd}
\phi^\circ(\bar{x};h):=\limsup_\substack{x\to\ox}{\theta\downarrow 0}\frac{\phi(x+\theta h)-\phi(x)}{\theta}.
\end{equation}
A crucial property of the function $h\mapsto\phi^\circ(\bar{x};h)$ is its automatic convexity, which is the source---together with the convex separation theorem---of nice calculus rules for it as well as for the {\em generalized gradient} (known also as the convexified or Clarke subdifferential) of $\phi$ at $\ox$ induced by \eqref{dd} via the conventional duality scheme
\begin{equation}\label{gg}
\partial^\circ\phi(\bar{x}):=\big\{x^*\in E^*\big|\;\langle x^*,h\rangle\le\phi^\circ(\bar{x};h)\;\mbox{ for every }\;h\in E\big\}
\end{equation}
of generating subdifferentials from directional derivatives. It is easy to observe that the set $\partial^\circ\phi(\bar{x})$ is nonempty, convex, and $\mathit{w}^*$-\hspace{0pt}compact in $E^*$. Furthermore, the convexity of $\phi^\circ(\ox;\cdot)$ easily implies by convex separation that \eqref{dd} is the support function of the generalized gradient, i.e., we have
$$
\phi^\circ(\bar{x};h)=\max_{x^*\in \partial^\circ\phi(\bar{x})}\langle x^*,h \rangle\quad\text{for every $h\in E$}.
$$
Recall that the function $\phi\colon E\to\overline{\RR}$ is (directionally) {\em regular} at $\ox$ if the classical directional derivative
$$
\phi'(\bar{x};h):=\lim_{\theta\downarrow 0}\frac{\phi(\bar{x}+\theta h)-\phi(\bar{x})}{\theta}
$$
exists and agrees with \eqref{dd}, i.e., $\phi'(\bar{x};h)=\phi^\circ(\bar{x};h)$ for every $h\in E$. The class of regular functions contains smooth and convex ones as well as their reasonable extensions and compositions; see \cite{cl83} for the facts reviewed above. Recall that a function $\phi\colon E\to\overline{\RR}$ is \textit{strictly differentiable} at $\bar{x}$ with its strict derivative $\nabla\phi(\bar{x})\in E^*$ if $\phi(\bar{x})<\infty$ and
\[
\lim_\substack{h\to 0}{x\to\bar{x}}\frac{\phi(x+h)-\phi(x)-\langle\nabla\phi(\bar{x}),h\rangle}{\|h\|}=0.
\]
This property lies properly between the usual Fr\'echet differentiability of a function at the given point and its continuous differentiability in a neighborhood of the point. Note that strict differentiability of $\phi$ implies that $\phi$ is locally Lipschitzian around $\ox$ and regular at this point with $\partial^\circ\phi(\bar{x})=\{\nabla\phi(\ox)\}$; see \cite[Propositions~2.2.4 and 2.3.6]{cl83}.

The construction of the {\em generalized/Clarke normal cone} \cite{cl83} to a subset $C$ of $E$ is defined with the usage of the $w^*$-closure operation by
\begin{equation}\label{cnc}
N^\circ(\bar{x};C):=\overline{\bigcup_{\al>0}\al\,\partial^\circ d_C(\bar{x})}^{\,w^*} \quad\text{at }\;\bar{x}\in C
\end{equation}
via the generalized gradient of the Lipschitz continuous {\em distance function} to $C$ given by $d_C(x):=\inf_{c\in C}\|c-x\|$. Recall also that the \textit{generalized/Clarke tangent cone} to $C$ is defined by
$$
T^\circ(\bar{x};C):=\left\{x\in E\;\big|\;\langle x^*,x\rangle\le 0 \ \text{for every }\;x^*\in N^\circ(\bar{x};C)\right\}\quad\text{at $\bar{x}\in C$},
$$
which admits another representation $T^\circ(\bar{x};C)=\left\{h\in E\mid d_C^\circ(\bar{x};h)=0\right\}$. The (Bouligand-Severi) \textit{contingent cone} to $C$ is defined by
$$
K(\bar{x};C):=\left\{h\in E\;\left|\;\liminf_{\theta\downarrow 0}\frac{d_C(\bar{x}+\theta h)}{\theta}=0\right.\right\} \quad \text{at $\bar{x}\in C$}.
$$
It follows from the definition that $T^\circ(\bar{x};C)\subset K(\bar{x};C)$, but $K(\bar{x};C)$ may not be convex in contrast to $T^\circ(\bar{x};C)$ and $N^\circ(\bar{x};C)$. The set $C$ is (tangentially) {\em regular} at $\bar{x}\in C$ if $T^\circ(\bar{x};C)=K(\bar{x};C)$.\vspace*{0.05in}

We proceed further with alternative constructions of generalized differentiation, we refer the reader to the book by Mordukhovich \cite{mo06}; see also \cite{mo18,rw98} for the related and complementary material. Given an extended-real-valued function $\phi\colon E\to\oR$ and $\varepsilon\ge 0$, the $\varepsilon$-\textit{subdifferential} of $\phi$ at a point $\bar{x}\in E$ with $\phi(\ox)<\infty$ is defined by
\begin{equation}\label{fs}
\hat{\partial}_\varepsilon\phi(\bar{x}):=\left\{x^*\in E^* \left|\;\liminf_{x\to \bar{x}}\frac{\phi(x)-\phi(\bar{x})-\langle x^*,x-\bar{x} \rangle}{\| x-\bar{x}\|}\ge -\varepsilon \right.\right\}.
\end{equation}
When $\varepsilon=0$, we set $\hat{\partial}\phi(\bar{x}):=\hat{\partial}_0\phi(\bar{x})$ is called the \textit{regular subdifferential} of $\phi$ at $\bar{x}$ and is also known as the Fr\'echet or viscosity subdifferential, as well as the presubdifferential of $\phi$ at $\ox$. Then the {\em limiting subdifferential} (known also as the basic, general, or Mordukhovich one) of $\phi$ at $\ox$ is defined by
\begin{equation}\label{bs}
\partial\phi(\bar{x}):=\Limsup_{\begin{subarray}{c}x\stackrel{\phi}{\to}\bar{x} \\ \varepsilon\downarrow 0\end{subarray}}\hat{\partial}_\varepsilon\phi(\bar{x}),
\end{equation}
where the notation ``$\Limsup$" for a set-valued mapping/multifunction $\Psi\colon E\rightrightarrows E^*$ stands for the (Painlev\'{e}--Kuratowski) {\em sequential outer limit} defined by
$$
\Limsup_{x\to \bar{x}}\Psi(x):=\left\{x^*\in E^*\left|\begin{array}{l}\exists \text{ sequences } x_k\to\ox,\;x^*_k\st{w^*}{\to}x^* \ \text{with}\\x_k^*\in \Psi(x_k)\text{ for each $k=1,2,\dots$} \end{array} \right.\right\},
$$
and where the symbol $x\st{\phi}{\to}\ox$ means that $x\to\ox$ with $\phi(x)\to\phi(\ox)$.

Recall that a Banach space $E$ is {\em Asplund} if every convex continuous function $\phi\colon U\to\R$  defined on an open convex set $U\subset E$ is Fr\'echet differentiable on a dense subset of $U$. This class of Banach spaces is sufficiently large including, in particular, any space with a Fr\'echet differentiable bump function (hence any space admitting an equivalent norm Fr\'echet differentiable off the origin, i.e., a {\em Fr\'echet smooth renorm}, and therefore every reflexive space), any space with a separable dual, and any space $E$ whose dual space $E^*$ is {\em weakly compactly generated} meaning that there exists a weakly compact subset of $E^*$ whose linear span in norm sense. There are many useful characterizations of Asplund spaces; among the most remarkable ones we mention that $E$ is Asplund if and only if every closed separable subspace of $E^*$ has a separable dual. It is also relevant to mention that any separable Asplund space admits a Fr\'echet smooth renorming.

If $E$ is an Asplund space and $\phi$ is lower semicontinuous around $\ox$, then
$$
\partial\phi(\bar{x})=\Limsup_{x\stackrel{\phi}{\to}\bar{x}}\hat{\partial}\phi(\bar{x}),
$$
and hence \eqref{bs} has the following representation:
\begin{equation*}
\partial\phi(\ox)=\left\{x^*\in E^*\left|\begin{array}{ll}\exists\mbox{ sequences } x_k\to\ox,\;x^*_k\st{w^*}{\to}x^* \mbox{ such that}\\
\disp\liminf_{x\to x_k}\frac{\phi(x)-\phi(x_k)-\la x^*_k,x-x_k\ra}{\|x-x_k\|}\ge 0
\end{array}
\right.\right\}.
\end{equation*}

Similarly to but a bit differently from \eqref{cnc}, define the {\em basic/limiting normal cone} \cite{mo06} to a subset $C$ of a Banach space $E$ by
\begin{equation}
\label{lnc}
N(\bar{x};C):=\bigcup_{\al>0}\al\,\partial d_C(\bar{x};C) \quad\text{at } \bar{x}\in C
\end{equation}
via the limiting subdifferential \eqref{bs} of the distance function. Then we have by \cite[Theorem~3.57(i)]{mo06} that $N^\circ(\bar{x};C)=\overline{{\rm co}}^{\,w^*}N(\bar{x};C)$ provided that the space $E$ is Asplund and that $C$ is locally closed around $\bar{x}$, where the symbol $\overline{{\rm co}}^{\,w^*}$ signifies for the weak$^*$ topological closure in $E^*$ of the convex hull of the set in question. Respectively, \cite[Theorem~3.57(ii)]{mo06} tells us that if $\phi$ is locally Lipschitzian around $\ox$ on a Banach space $E$, then $\partial^\circ\phi(\ox)=\overline{{\rm co}}^{\,w^*}\partial\phi(\ox)$.

Finally in this subsection, recall that for any $\varepsilon\ge 0$ the \textit{$\varepsilon$-coderivative} of a set-valued mapping $\Gamma\colon E\rightrightarrows F$ at $(x,y)\in E\times F$ is the multifunction $\hat{D}^*_\varepsilon \Gamma(x,y)\colon F^*\rightrightarrows E^*$ given by
$$
\hat{D}^*_\varepsilon \Gamma(x,y)(y^*):=\left\{ x^*\in E^*\left|\ (x^*,-y^*)\in \hat{N}_\varepsilon((x,y);\mathrm{gph}\,\Gamma)\right.\right\},
$$
where $\mathrm{gph}\,\Gamma:=\{(x,y)\in E\times F\mid y\in \Gamma(x)\}$ is the graph of $\Gamma$, and where the $\varepsilon$-normal cone $\Hat N_\varepsilon$ is defined via the $\varepsilon$-subdifferential \eqref{fs} of the set indicator function equal $0$ at set points and $\infty$ otherwise. When $\varepsilon=0$, we set $\hat{D}^*\Gamma(x,y)(y^*):=\hat{D}^*_0\Gamma(x,y)(y^*)$, which is called the (Fr\'echet) \textit{regular coderivative} of $\Gamma$ at $(x,y)$. The (limiting, Mordukhovich) \textit{normal coderivative} of $\Gamma$ at $(\bar{x},\bar{y})\in E\times F$ is the multifunction $D^*_N\Gamma(\bar{x},\bar{y}):F^*\rightrightarrows E^*$ defined by
$$
D^*_N\Gamma(\bar{x},\bar{y})(\bar{y}^*):=\Limsup_{\begin{subarray}{c} (x,y)\to(\bar{x},\bar{y})\\y^*\stackrel{\mathit{w}^*}{\to}\bar{y}^*\\\varepsilon\downarrow 0\end{subarray}}\hat{D}^*_\varepsilon\Gamma(x,y)(y^*).
$$
If both $E$ and $F$ are Asplund spaces, we have
$$
D^*_N \Gamma(\bar{x},\bar{y})(\bar{y}^*)=\Limsup_{\begin{subarray}{c}(x,y)\to(\bar{x},\bar{y})\\y^*\stackrel{\mathit{w}^*}{\to}\bar{y}^*\end{subarray}}\hat{D}^*\Gamma(x,y)(y^*).
$$

\subsection{Subgradients of Marginal Functions}\label{subseq1}

Here we present a result on subdifferentiation of a general class of {\em marginal functions} in variational analysis, which is instrumental for the subsequent subdifferentiation of the value functions in both deterministic and stochastic DP models of our consideration in what follows.

Given an extended-real-valued function $\varphi\colon E\times F\to\overline{\RR}$ and a multifunction $\Gamma\colon E\rightrightarrows F$ between Banach spaces, the corresponding {\em marginal function} is introduced in the form
\begin{equation}\label{eq1}
\mu(x):=\inf_{y\in\Gamma(x)}\varphi(x,y),\quad x\in E,
\end{equation}
while the associated {\em argminimum multifunction} $G\colon E\rightrightarrows F$ is defined by
\begin{equation}\label{arg}
G(x):=\left\{y\in\Gamma(x)\;\big|\;\varphi(x,y)=\mu(x)\right\},\quad x\in E.
\end{equation}
The marginal function \eqref{eq1} belongs to a general class of extended-real-valued functions, which appear in a broad spectrum of problems in mathematics and its applications that may not be even related to optimization; see \cite{mo06,mo18} for more discussions. On the other hand, we can treat \eqref{eq1} as the ({\em optimal}) {\em value function} of the {\em parametric optimization} problem
\begin{equation*}
\mbox{minimize }\;\ph(x,y)\;\mbox{ subject to }\;y\in\Gamma(x)
\end{equation*}
for which the argminimum multifunction \eqref{arg} defines the parameterized set of optimal solutions. This interpretation is important in what follows.

We say that the argminimum multifunction $G:E\rightrightarrows F$ is \textit{inner semicontinuous} at $(\bar{x},\bar{y})\in \mathrm{gph}\,G$ if for every sequence $x_k\to\bar{x}$ there exists a sequence of $y_k\in G(x_k)$ that contains a subsequence converging to $\bar{y}$. This multifunction is said to be \textit{inner semicompact} at $\bar{x}\in E$ if for every sequence $x_k\to\bar{x}$ there is a sequence of $y_k\in G(x_k)$ that contains a convergent subsequence.

Based on \eqref{eq1}, consider now the function $\vartheta:E\times F\to\overline{\RR}$ given by
$$
\vartheta(x,y):=\varphi(x,y)+\delta((x,y);\mathrm{gph}\,\Gamma)\;\mbox{ for all }\;(x,y)\in E\times F,
$$
where $\delta((\cdot,\cdot);\mathrm{gph}\,\Gamma)$ is the indicator function of $\mathrm{gph}\,\Gamma$, i.e., $\delta((x,y);\mathrm{gph}\,\Gamma):=0$ if $(x,y)\in\mathrm{gph}\,\Gamma$ and $\delta((x,y);\mathrm{gph}\,\Gamma):=\infty$ otherwise.\vspace*{0.05in}

Now we present important upper estimates of the limiting subdifferential of \eqref{eq1} is taken from \cite[Theorem 1.108 and Corollary 1.109]{mo06}.

\begin{prop}[\bf subdifferentiation of marginal functions]\label{prop1}
Let the marginal function \eqref{eq1} be finite at $\bar{x}\in E$ with $G(\bar{x})\ne\emp$, and let both spaces $E$ and $F$ be Banach. The following assertions hold:

{\bf(i)} If $G$ is inner semicontinuous at $(\bar{x},\bar{y})\in\mathrm{gph}\,G$, then
$$
\partial\mu(\bar{x})\subset\left\{x^*\in E^*\mid(x^*,0)\in\partial\vartheta(\bar{x},\bar{y})\right\}.
$$
If furthermore $\varphi$ is strictly differentiable at this point, then
$$
\partial\mu(\bar{x})\subset\nabla_x\varphi(\bar{x},\bar{y})+D^*_N\Gamma(\bar{x},\bar{y})\big(\nabla_y\varphi(\bar{x},\bar{y})\big).
$$

{\bf(ii)} If $G$ is inner semicompact at $\bar{x}$, the graph of $\Gamma$ is closed at $\bar{x}$, and $\varphi$ is lower semicontinuous at every $(\bar{x},y)$ with $y\in\Gamma(\bar{x})$, then we have
$$
\partial\mu(\bar{x})\subset\left\{x^*\in E^*\;\left|\;(x^*,0)\in\bigcup_{\bar{y}\in G(\bar{x})}\partial\vartheta(\bar{x},\bar{y})\right.\right\}.
$$
\end{prop}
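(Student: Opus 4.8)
The plan is to represent $\mu$ as the \emph{unconstrained} infimum $\mu(x)=\inf_{y}\vartheta(x,y)$ and to lift $\varepsilon$-subgradients of $\mu$ at points near $\ox$ to $\varepsilon$-subgradients of $\vartheta$ at the associated points of $\gph\Gamma$; passing to the limit in the definition \eqref{bs} of the basic subdifferential will then give the claimed inclusions. Fix $x^*\in\partial\mu(\ox)$ and use \eqref{bs} to produce sequences $x_k\to\ox$ with $\mu(x_k)\to\mu(\ox)$ (so $\mu(x_k)$ is finite for large $k$), $\varepsilon_k\downarrow 0$, and $x_k^*\st{w^*}{\to}x^*$ with $x_k^*\in\hat\partial_{\varepsilon_k}\mu(x_k)$. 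Next I would attach companion minimizers $y_k\in G(x_k)$: in case~(i) the inner semicontinuity of $G$ at $(\ox,\oy)$ yields such $y_k$ with $y_k\to\oy$; in case~(ii) the inner semicompactness yields $y_k$ admitting a subsequence with $y_k\to\oy$, and then the closedness of $\gph\Gamma$ at $\ox$ forces $\oy\in\Gamma(\ox)$ while the lower semicontinuity of $\varphi$ at $(\ox,\oy)$ gives $\varphi(\ox,\oy)\le\liminf_k\varphi(x_k,y_k)=\liminf_k\mu(x_k)=\mu(\ox)$, whence $\oy\in G(\ox)$.

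The crucial elementary step is the inclusion $(x_k^*,0)\in\hat\partial_{\varepsilon_k}\vartheta(x_k,y_k)$. Using $\vartheta(x,y)\ge\mu(x)$ for all $(x,y)$, the equality $\vartheta(x_k,y_k)=\varphi(x_k,y_k)=\mu(x_k)$ (since $y_k\in G(x_k)$), the identity $\langle(x_k^*,0),(x,y)-(x_k,y_k)\rangle=\langle x_k^*,x-x_k\rangle$, and the bound $\|x-x_k\|\le\|(x,y)-(x_k,y_k)\|$, one deduces from $x_k^*\in\hat\partial_{\varepsilon_k}\mu(x_k)$ that
$$
\liminf_{(x,y)\to(x_k,y_k)}\frac{\vartheta(x,y)-\vartheta(x_k,y_k)-\langle(x_k^*,0),(x,y)-(x_k,y_k)\rangle}{\|(x,y)-(x_k,y_k)\|}\ge-\varepsilon_k.
$$
Since $(x_k,y_k)\to(\ox,\oy)$, $(x_k^*,0)\st{w^*}{\to}(x^*,0)$, $\varepsilon_k\downarrow 0$, and $\vartheta(x_k,y_k)=\mu(x_k)\to\mu(\ox)=\vartheta(\ox,\oy)$, the definition \eqref{bs} gives $(x^*,0)\in\partial\vartheta(\ox,\oy)$. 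This establishes the first inclusion in (i) and, after taking the union over all subsequential limit points $\oy\in G(\ox)$, the inclusion in (ii).

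To sharpen the first inclusion of (i) when $\varphi$ is strictly differentiable at $(\ox,\oy)$, I would invoke the exact sum rule for the basic subdifferential of a sum of a strictly differentiable function and an arbitrary lower semicontinuous one, valid in every Banach space, together with the identity $\partial\delta((\ox,\oy);\gph\Gamma)=N((\ox,\oy);\gph\Gamma)$; see \cite{mo06}. This yields $\partial\vartheta(\ox,\oy)=\nabla\varphi(\ox,\oy)+N((\ox,\oy);\gph\Gamma)$, so $(x^*,0)\in\partial\vartheta(\ox,\oy)$ is equivalent to $\big(x^*-\nabla_x\varphi(\ox,\oy),\,-\nabla_y\varphi(\ox,\oy)\big)\in N((\ox,\oy);\gph\Gamma)$, which by the very definition of the normal coderivative means $x^*-\nabla_x\varphi(\ox,\oy)\in D^*_N\Gamma(\ox,\oy)\big(\nabla_y\varphi(\ox,\oy)\big)$; rearranging gives the asserted formula.

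The step I expect to be the main obstacle is the bookkeeping in the general, possibly non-Asplund, Banach setting: one must carry the $\varepsilon_k$-subdifferentials throughout and verify that the estimate above survives both the change of variables $x\mapsto(x,y)$ and the intersection with $\gph\Gamma$, and in part~(ii) one must keep track of the fact that $\oy$ is obtained only as a subsequential limit, which is precisely why the conclusion there is a union over $G(\ox)$ rather than a single-point formula. Once the exact sum rule is in hand, the strict-differentiability refinement in~(i) is routine.
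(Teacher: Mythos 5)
Your argument is correct, and it coincides with the paper's treatment of this statement: the paper gives no proof of Proposition~\ref{prop1}, citing instead \cite[Theorem~1.108 and Corollary~1.109]{mo06}, and your route (representing $\mu$ as the unconstrained infimum of $\vartheta$, lifting $\varepsilon$-subgradients $x_k^*\in\hat\partial_{\varepsilon_k}\mu(x_k)$ to $(x_k^*,0)\in\hat\partial_{\varepsilon_k}\vartheta(x_k,y_k)$ along minimizers $y_k\in G(x_k)$ supplied by inner semicontinuity/semicompactness, passing to the limit in \eqref{bs}, and then applying the exact sum rule for a strictly differentiable summand to get the coderivative form) is precisely the proof of those cited results.
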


\section{Dynamic Programming in Banach Spaces}\label{sec:DP}

The first subsection of this section is devoted to the formulation of a deterministic model of dynamic programming in general Banach spaces with presenting and discussing the major assumptions on its initial data. In the second subsection we prove the Lipschitz continuity of the value function as well as its strict differentiability under additional assumptions, and then we derive new subdifferential necessary optional conditions for this model.

\subsection{Description of the deterministic DP Model}

Let $\NN$ be the set of nonnegative integers, and let the set of time horizons be indexed by $t=0,1,\dots$. For each $t\in\NN$ denote by $X_t$ an \textit{action space}, which is assumed to be an arbitrary Banach. At each time period the decision-maker knows a \textit{cost function} $u_t:X_t\times X_{t+1}\to\overline{\RR}$ and a multifunction $\Gamma_t:X_t\rightrightarrows X_{t+1}$ describing {\em feasibility constraints}. Then an \textit{admissible program} starting from period $t\in\NN$ with the initial condition $x\in X_t$ is an element $(x_t,x_{t+1},\dots)$ in the product space $\prod_{s=t}^\infty X_s$ satisfying $x_{s+1}\in\Gamma_s(x_s)$ for every $s\ge t$ and $x_t=x$. The set of admissible programs from $t$ with $x_t=x$ is denoted by $\A_t(x)$, which gives rise to a multifunction $\A_t:X_t\rightrightarrows \prod_{s=t+1}^\infty X_s$. Having an initial condition $x\in X_0$, we consider the discrete-time deterministic DP problem on the {\em infinite horizon} described by
\begin{equation}\label{DP}
\begin{aligned}
&\inf\sum_{t\in\NN} u_t(x_t,x_{t+1}) \\
&\text{s.t.\ $x_{t+1}\in \Gamma_t(x_t)$ for each $t\in\NN$, $x_0=x\in X_0$}.
\end{aligned}
\end{equation}
Define the (optimal) \textit{value function} $v_t:X_t\to\overline{\RR}$ by
\begin{equation}\label{val1}
v_t(x):=\inf_{(x_t,x_{t+1},\dots)\in\A_t(x)}\sum_{s=t}^\infty u_s(x_s,x_{s+1}).
\end{equation}
An admissible program $(x_0,x_1,\dots)\in \A_0(x)$ with a given $x\in X_0$ is \textit{optimal} if $v_0(x)$ is finite with $v_0(x)=\sum_{t\in\NN}u_t(x_t,x_{t+1})$. For the primitive $\{X_t,\Gamma_t,u_t\}_{t\in \NN}$ of the model, the following {\em summability condition} on the cost function is in force throughout this section.\:
\begin{assmp}
\label{assmp1}
$\displaystyle\sum_{t\in\NN}\sup_{(x,y)\in\mathrm{gph}\,\Gamma_t}|u_t(x,y)|<\infty$.
\end{assmp}

It follows from the \textit{Bellman principle of optimality} that every optimal program $(x_0,x_1,\dots)\in\A_0(x_0)$ to \eqref{DP} satisfies the equality
\begin{equation}\label{BP}
v_t(x_t)=u_t(x_t,x_{t+1})+v_{t+1}(x_{t+1})\quad\text{for each $t\in\NN$}.
\end{equation}
We can verify by standard arguments that the \textit{Bellman equation}
\begin{equation}\label{BE}
v_t(x)=\inf_{y\in\Gamma_t(x)}\left\{u_t(x,y)+v_{t+1}(y)\right\}\quad\text{for every $x\in X_t$}
\end{equation}
is fulfilled for the value function \eqref{val1}. It shows therefore that the value function \eqref{val1} belongs to the class of marginal functions \eqref{eq1}. This simple observation motivates the introduction of the \textit{policy multifunction} $G_t:X_t\rightrightarrows X_{t+1}$ for \eqref{BE} defined by
\begin{equation}\label{eq2}
G_t(x):=\left\{y\in\Gamma_t(x)\mid v_t(x)=u_t(x,y)+v_{t+1}(y)\right\}.
\end{equation}
Any mapping $\gamma_t:X_t\to X_{t+1}$ satisfying $\gamma_t(x)\in G_t(x)$ for every $x\in X_t$ is called a \textit{policy mapping}. By \eqref{BP} and \eqref{BE}, an admissible program $(x_0,x_1,\dots)\in\A_0(x)$ is optimal if and only if $x_{t+1}\in G_t(x_t)$ for each $t\in\NN$ with $x_0=x$. It follows from the classical Berge's maximum theorem that if $\Gamma_t$ is upper semicontinuous with compact values and $u_t$ is lower semicontinuous on $\mathrm{gph}\,\Gamma_t$, then the value function $v_t$ is lower semicontinuous. If moreover the mappings $\Gamma_t$ and $u_t$ are continuous, then $G_t$ is upper semicontinuous; see, e.g., \cite[Lemma~17.30 and Theorem~17.31]{ab06}.\vspace*{0.05in}

The following crucial viability notions were introduced in our paper \cite{ms18}.

\begin{dfn}[\bf local viability]\label{viab}
Let $G_t\colon X_t\rightrightarrows X_{t+1}$ be a policy multifunction with $G_t(\ox)\ne\emp$ for some $\ox\in X_t$. We say that:

{\bf(i)} $G_t$ is {\sc locally lower viable} around $\ox$ if there exists a neighborhood $U$ of $\ox$ such that $G_t(x)\cap\Gamma_t(x')\ne\emp$ for every $x,x'\in U$.

{\bf(ii)} $G_t$  is {\sc locally upper viable} around $\ox$ if there exists a neighborhood $U$ of $\ox$ such that $G_t(x)\subset\Gamma_t(x')$ for every $x,x'\in U$.
\end{dfn}

\noindent
Note that both local upper and lower viability conditions in Definition \ref{viab} are far-going extensions of the standard interiority condition, which says that there exists a neighborhood $U$ of $\bar{x}\in X_t$ such that for every $x\in U$ we can find $y\in G_t(x)$ so that $(x,y)$ belongs to the interior of $\mathrm{gph}\,\Gamma_t$. In particular, the local lower viability condition allows us to obtain the local Lipschitz continuity of the value function $v_t$. The local upper viability condition is used below to evaluate the generalized gradient of $v_t$ and to derive necessary optimality conditions for problem \eqref{DP} in its terms. Observe that the local upper viability condition holds automatically if $\Gamma_t$ is independent of $x$.

\subsection{Necessary Conditions for Optimality}

To formulate the first theorem, denote by $\partial_x^\circ u_t(\bar{x},y)$ the partial generalized gradient \eqref{gg} of the (Lipschitz) function $u_t(\cdot,y)$ at $\bar{x}\in X_t$ when $y\in X_{t+1}$ is fixed. The notation $\partial_y^\circ u_t(x,\bar{y})$ is similar.

\begin{thm}[\bf Lipschitz continuity and the generalized gradient inclusion for the value function]\label{thm1}
Let $X_t$ be a Banach space for each $t\in\NN$, and let $\ox\in X_t$ be such that $G_t(\ox)\ne\emp$. Assume that the cost function $u_t$ is locally Lipschitzian around $(\ox,y)$ for every $y\in G_t(x)$ with $x$ near $\ox$ and that the policy multifunction $G_t$ is locally lower viable around $\ox$. Then the value function $v_t$ is locally Lipschitzian around $\ox$. If in addition $G_t$ is locally upper viable around $\ox$ and if $u_t$ is regular at $(\bar{x},\bar{y})\in\mathrm{gph}\,G_t$ for some $\oy\in\Gamma_t(\ox)$, then we have the generalized grsdient inclusion
\begin{eqnarray}\label{env1}
\partial^\circ v_t(\bar{x})\subset\partial^\circ_x u_t(\bar{x},\bar{y}).
\end{eqnarray}
\end{thm}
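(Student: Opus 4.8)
The plan is to prove the two assertions in turn: the Lipschitz continuity by a direct comparison argument built on the Bellman equation \eqref{BE}, and the gradient inclusion \eqref{env1} by working with Clarke's generalized directional derivative \eqref{dd}. For the first part, note first that $v_t(\ox)$ is real-valued: it is bounded below by $-\sum_{s\ge t}\sup_{\gph\Gamma_s}|u_s|$ thanks to Assumption~\ref{assmp1}, and bounded above by the cost of any admissible program through $\ox$, which exists because $G_t(\ox)\ne\emp$. Now fix a neighborhood $U$ of $\ox$ supplied by local lower viability of $G_t$ (Definition~\ref{viab}(i)). Given $x,x'\in U$, choose $y\in G_t(x)\cap\Gamma_t(x')$. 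Since $y\in G_t(x)$, the Bellman equation \eqref{BE} together with \eqref{eq2} gives $v_t(x)=u_t(x,y)+v_{t+1}(y)$, whereas $y\in\Gamma_t(x')$ yields $v_t(x')\le u_t(x',y)+v_{t+1}(y)$; subtracting cancels the $v_{t+1}(y)$ terms and produces $v_t(x')-v_t(x)\le u_t(x',y)-u_t(x,y)$. Shrinking $U$ so that the functions $u_t(\cdot,y)$ in play are Lipschitzian with a common modulus $\ell_t$ near $\ox$ (this is where one uses that $u_t$ is Lipschitz around every $(\ox,y)$ with $y\in G_t(x)$, $x$ near $\ox$), we obtain $v_t(x')-v_t(x)\le\ell_t\|x'-x\|$; interchanging $x$ and $x'$ gives $|v_t(x')-v_t(x)|\le\ell_t\|x'-x\|$, so $v_t$ is locally Lipschitzian around $\ox$.

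For the inclusion \eqref{env1}, observe that by the duality \eqref{gg}, and since both $v_t$ and $u_t(\cdot,\oy)$ are Lipschitzian around $\ox$, it is equivalent to the support-function inequality $v_t^\circ(\ox;h)\le(u_t(\cdot,\oy))^\circ(\ox;h)$ for every $h\in X_t$, where the right-hand side is the Clarke directional derivative of the partial function whose generalized gradient is $\partial^\circ_x u_t(\ox,\oy)$. I record two reductions. First, since $u_t$ is regular at $(\ox,\oy)$, restricting the directions to $(h,0)$ shows that $u_t(\cdot,\oy)$ is regular at $\ox$ and that $(u_t(\cdot,\oy))^\circ(\ox;h)=u_t^\circ\big((\ox,\oy);(h,0)\big)$ for all $h$: indeed $u_t^\circ((\ox,\oy);(h,0))\ge(u_t(\cdot,\oy))^\circ(\ox;h)\ge(u_t(\cdot,\oy))'(\ox;h)$ trivially, while joint regularity collapses the first term to the last. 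Second, local lower viability forces $G_t(x)\ne\emp$ for all $x$ near $\ox$ (take $x'=x$ in Definition~\ref{viab}(i)), and local upper viability lets us fix a neighborhood $U$ with $G_t(x)\subset\Gamma_t(x')$ for all $x,x'\in U$.

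With these in hand, for $x\in U$ and $\theta>0$ small enough that $x+\theta h\in U$, pick $y_x\in G_t(x)$; then $v_t(x)=u_t(x,y_x)+v_{t+1}(y_x)$ by \eqref{BP}--\eqref{eq2}, and $y_x\in\Gamma_t(x+\theta h)$ by upper viability, so $v_t(x+\theta h)\le u_t(x+\theta h,y_x)+v_{t+1}(y_x)$. Subtracting cancels the tail term $v_{t+1}(y_x)$ and gives
\[
\frac{v_t(x+\theta h)-v_t(x)}{\theta}\le\frac{u_t(x+\theta h,y_x)-u_t(x,y_x)}{\theta}.
\]
Taking the upper limit over $x\to\ox$, $\theta\downarrow 0$ along a realizing sequence $(x_k,\theta_k)$ with $y_k:=y_{x_k}\in G_t(x_k)$, one identifies the right-hand side with $u_t^\circ\big((\ox,\oy);(h,0)\big)$, and together with the first reduction this yields $v_t^\circ(\ox;h)\le(u_t(\cdot,\oy))^\circ(\ox;h)$, hence \eqref{env1}. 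The step I expect to be the main obstacle is precisely this passage to the limit: the displayed inequality bounds $v_t^\circ(\ox;h)$ by $\limsup_k\theta_k^{-1}\big(u_t(x_k+\theta_k h,y_k)-u_t(x_k,y_k)\big)$ with the minimizers $y_k\in G_t(x_k)$ drifting with $k$, and recognizing this limit superior as $u_t^\circ((\ox,\oy);(h,0))$ requires the pairs $(x_k,y_k)$ to cluster at $(\ox,\oy)$. This is exactly where the structure of the policy multifunction $G_t$ and the two viability conditions of Definition~\ref{viab} must be combined to keep the relevant optimal responses controlled around $\oy$; without such control one only recovers the crude bound $v_t^\circ(\ox;h)\le\ell_t\|h\|$, which merely reproves Lipschitz continuity. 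A secondary and more routine point, already flagged above, is securing a uniform Lipschitz modulus $\ell_t$ in the first part, which is handled by the neighborhood-shrinking indicated there.
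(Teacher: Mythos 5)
Your proposal follows the paper's proof almost step for step: the Lipschitz part via the lower-viability selection $y\in G_t(x)\cap\Gamma_t(x')$ and cancellation of the common term $v_{t+1}(y)$ is identical, and the inclusion part via the difference-quotient bound \eqref{eq5}, the regularity chain $u_t^\circ\big((\ox,\oy);(h,0)\big)=(u_t)^\circ_x(\ox,\oy;h)$, and the final support-function/convex-separation argument is also exactly the paper's route.

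The step you flag as the main obstacle is genuinely unresolved in your write-up, and it is worth saying plainly that it is also the thinnest point of the paper's own proof. After \eqref{eq5} the paper asserts $v_t^\circ(\ox;h)\le\limsup\,\theta^{-1}\big(u_t(x+\theta h,y)-u_t(x,y)\big)$ with the upper limit taken over $(x,y)\to(\ox,\oy)$ in $\gph G_t$ and $\theta\downarrow 0$, and this requires precisely what you identify: for sequences $x_k\to\ox$, $\theta_k\downarrow 0$ realizing $v_t^\circ(\ox;h)$ one must be able to choose minimizers $y_k\in G_t(x_k)$ with $y_k\to\oy$. Neither local upper nor local lower viability supplies this; it is an inner-semicontinuity-type requirement on the policy multifunction $G_t$ at $(\ox,\oy)$ (a hypothesis the paper does impose later, in Theorem~\ref{lim-val}(i), but not in Theorem~\ref{thm1}, whose conclusion \eqref{env1} is moreover claimed for an arbitrary $\oy\in G_t(\ox)$). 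Without such control the argument only yields a bound by the partial Clarke derivative at some cluster point of the drifting minimizers, or the crude estimate $v_t^\circ(\ox;h)\le\ell_t\|h\|$, as you note. So you have reproduced the paper's argument, including its gap, and correctly located the crux rather than closed it; to complete the proof one should either add inner semicontinuity of $G_t$ at $(\ox,\oy)$ to the hypotheses or verify it from the model data in the intended applications. Your preliminary reductions (finiteness of $v_t(\ox)$ from Assumption~\ref{assmp1}, nonemptiness of $G_t(x)$ near $\ox$ from lower viability, and the collapse of the joint regularity to partial regularity) are all correct and consistent with the paper.
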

\begin{proof}
Fix $\bar{x}\in X_t$ and by the local lower viability of $G_t$ find a neighborhood $U$ of $\bar{x}$ such that $G_t(x)\cap\Gamma_t(x')\ne\emp$ for every $x,x'\in U$. Picking $y\in G_t(x)\cap\Gamma_t(x')$ for arbitrary points $x,x'\in U$ ensures that
$$
v_t(x)=u_t(x,y)+v_{t+1}(y).
$$
Since $v_t(x')\le u_t(x',y)+v_{t+1}(y)$ by \eqref{BE} and since $u_t$ is locally Lipschitzian with modulus $\ell_t$, we have
$$
v_t(x')-v_t(x)\le u_t(x',y)-u_t(x,y)\le\ell_t\|x'-x\|.
$$
Interchanging the role of $x$ and $x'$ above tells us that
$$
|v_t(x)-v_t(x')|\le\ell_t\|x-x'\|\;\mbox{ whenever }\;x,x'\in U,
$$
and hence the value function $v_t$ is locally Lipschitzian.

Next we justify the generalized gradient inclusion \eqref{env1} under the additional assumptions made. It follows from the local upper viability of $G_t$ that for every $x\in U$ and any given direction $h\in X$ we have $G_t(x)\subset\Gamma_t(x+\theta h)$ when $\th>0$ is sufficiently small. Without loss of generality choose $y\in G_t(x)\subset\Gamma_t(x+\theta h)$ for every $\theta>0$ and thus get
$$
v_t(x)=u_t(x,y)+v_{t+1}(y).
$$
By the principle of optimality in dynamic programming we have
$$
v_t(x+\theta h)\le u_t(x+\theta h,y)+v_{t+1}(y),
$$
\begin{equation}\label{eq5}
\frac{v_t(x+\theta h)-v_t(x)}{\theta}\le\frac{u_t(x+\theta h,y)-u_t(x,y)}{\theta}.
\end{equation}
Passing to the limit in \eqref{eq5} as $\th\dn 0$ gives us
$$
\limsup_\substack{(x,y)\stackrel{\mathrm{gph}\,G_t}{\longrightarrow}(\bar{x},\bar{y})}{\theta\downarrow 0}\frac{u_t(x+\theta h,y)-u_t(x,y)}{\theta} \le\limsup_\substack{(x,y)\to(\bar{x},\bar{y})}{\theta\downarrow 0}\frac{u_t(x+\theta h,y)-u_t(x,y)}{\theta},
$$
which readily implies due to \eqref{dd} that
$$
v_t^\circ(\bar{x};h)\le u_t^\circ(\bar{x},\bar{y};(h,0))=u_t'(\bar{x},\bar{y};(h,0))=(u_t)'_x(\bar{x},\bar{y};h)=(u_t)^\circ_x(\bar{x},\bar{y};h),
$$
where $u_t'(\bar{x},\bar{y};(h,0))$ (resp.\ $u_t^\circ(\bar{x},\bar{y};(h,0))$) denotes the (resp.\ generalized) directional derivative of $u_t$ at $(\bar{x},\bar{y})$ in the direction $(h,0)\in X_t\times X_{t+1}$, and where $(u_t)'_x(\bar{x},\bar{y};h)$ (resp.\ $(u_t)^\circ_x(\bar{x},\bar{y};h)$) stands for the partial (resp.\ generalized) directional derivative of $u_t(\cdot,\bar{y})$ at $\bar{x}$ in the direction $h\in X_t$. The obtained inequality is equivalent to
$$
\max_{x^*\in \partial^\circ v_t(\bar{x})}\langle x^*,h \rangle\le\max_{x^*\in\partial^\circ_x u_t(\bar{x},\bar{y})}\langle x^*,h\rangle \quad\text{for every $h\in X_t$}.
$$
Employing finally the convex separation theorem due to the convexity and the weak$^*$ compactness of the generalized gradient sets above, we arrive at \eqref{env1} and thus complete the proof of the theorem.
\end{proof}

As a consequence of Theorem~\ref{thm1}, we get the following result on the strict differentiability of the value function $v_t$. It is a significant improvement upon the known results in this direction with applications to optimal economic growth models (see, e.g., \cite{alv98,bs79,blv96}), since we remove the convexity assumption and mitigate the interior condition in the Banach space setting. For another assumption that replaces the interiority condition to derive the differentiability of the value function under convexity hypotheses, see \cite{rzs09}.

\begin{cor}[\bf strict differentiability of the value function] \label{cor1}
Assume in the setting of Theorem {\rm\ref{thm1}} that $G_t$ is locally upper viable around $\ox$ and that $u_t(\cdot,\bar{y})$ is strictly differentiable at $\bar{x}\in X_t$ with $(\bar{x},\bar{y})\in\mathrm{gph}\,G_t$. Then $v_t$ is strictly differentiable at $\bar{x}$ and its strict derivative at $\ox$ is calculated by
$$
\nabla v_t(\bar{x})=\nabla_x u_t(\bar{x},\bar{y}).
$$
\end{cor}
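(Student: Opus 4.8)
The plan is to read the conclusion off Theorem~\ref{thm1} by feeding its generalized gradient inclusion into Clarke's characterization of strict differentiability, exploiting the fact that under the present hypotheses the right-hand side of that inclusion has already shrunk to a single point. First I would make sure the premises of Theorem~\ref{thm1} are in force: strict differentiability of $u_t(\cdot,\bar y)$ at $\bar x$ implies, by the properties recorded right after the definition of strict differentiability (see \cite[Propositions~2.2.4 and~2.3.6]{cl83}), that $u_t(\cdot,\bar y)$ is locally Lipschitzian around $\bar x$ and regular at $\bar x$ with $\partial^\circ_x u_t(\bar x,\bar y)=\{\nabla_x u_t(\bar x,\bar y)\}$; combined with the local lower viability inherited from the setting of Theorem~\ref{thm1} and the local upper viability assumed here, this is precisely what that theorem needs.

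Next I would invoke Theorem~\ref{thm1}. Its first conclusion gives that $v_t$ is locally Lipschitzian around $\bar x$, hence $\partial^\circ v_t(\bar x)$ is nonempty, convex, and $w^*$-compact in $X_t^*$. Its second conclusion is the inclusion \eqref{env1}, which in the present situation reads
$$
\partial^\circ v_t(\bar x)\subset\partial^\circ_x u_t(\bar x,\bar y)=\{\nabla_x u_t(\bar x,\bar y)\}.
$$
Since a nonempty subset of a singleton equals that singleton, this forces $\partial^\circ v_t(\bar x)=\{\nabla_x u_t(\bar x,\bar y)\}$.

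Finally I would upgrade ``locally Lipschitzian with a one-point generalized gradient'' to ``strictly differentiable''. This is the converse half of Clarke's characterization in \cite[Proposition~2.2.4]{cl83}: a function locally Lipschitzian around a point whose generalized gradient there reduces to $\{x^*\}$ is strictly differentiable at that point with strict derivative $x^*$. Applied to $v_t$ at $\bar x$, this yields that $v_t$ is strictly differentiable at $\bar x$ with $\nabla v_t(\bar x)=\nabla_x u_t(\bar x,\bar y)$, as claimed. The step I expect to need the most care is this last one in the genuinely infinite-dimensional Banach setting: one has to know that, for locally Lipschitzian functions, Clarke's notion of strict differentiability coincides with the strict Fr\'echet differentiability used in the displayed definition above, so that a singleton generalized gradient indeed makes the defining limit vanish. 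Everything before it is a direct read-off from Theorem~\ref{thm1} and the recorded properties of strictly differentiable functions.
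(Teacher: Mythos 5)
Your proof is correct and follows essentially the same route as the paper: feed the singleton $\partial^\circ_x u_t(\bar{x},\bar{y})=\{\nabla_x u_t(\bar{x},\bar{y})\}$ coming from strict differentiability into the inclusion \eqref{env1} of Theorem~\ref{thm1}, use the nonemptiness of the generalized gradient of the locally Lipschitzian $v_t$ to turn that inclusion into an equality, and then invoke the converse half of \cite[Proposition~2.2.4]{cl83}. The only difference is that you spell out this last converse step (a locally Lipschitzian function with a singleton generalized gradient is strictly differentiable there), which the paper's one-line argument leaves implicit.
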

\begin{proof}
It immediately follows from the facts \cite{cl83} that any function strictly differentiable at a given point is regular at this point and its generalized gradient reduces to the strict derivative therein.
\end{proof}

The next important result, which is formulated via the limiting subdifferential \eqref{bs}, is a consequence of Proposition~\ref{prop1} and Corollary~\ref{cor1}.

\begin{thm}[\bf limiting subgradient inclusions for the value function]\label{lim-val} Let $X_t$ be a Banach space for each $t\in\NN$, and let $\vartheta_t:X_t\times X_{t+1}\to\overline{\RR}$ be the extended-real-valued function defined by
$$
\vartheta_t(x,y):=u_t(x,y)+v_{t+1}(y)+\delta_{\mathrm{gph}\,\Gamma_t}(x,y).
$$
The following assertions are satisfied:

{\bf(i)} If $G_t$ is inner semicontinuous at $(\bar{x},\bar{y})\in\mathrm{gph}\,G_t$, then
$$
\partial v_t(\bar{x})\subset\left\{x^*\in X_t^*\mid (x^*,0)\in\partial\vartheta_t(\bar{x},\bar{y})\right\}.
$$
If furthermore $u_t$ and $v_t$ are strictly differentiable at the reference points for each $t\in\NN$, then for every policy mapping $\gamma_{t+1}:X_{t+1}\to X_{t+2}$ we have
$$
\nabla v_t(\bar{x})\in\nabla_xu_t(\bar{x},\bar{y})+D^*_N\Gamma_t(\bar{x},\bar{y})\big(\nabla_x u_{t+1}(\bar{y},\gamma_{t+1}(\bar{y})\big).
$$

{\bf(ii)} If $G_t$ is inner semicompact at $\bar{x}$, the graph of $\Gamma_t$ is closed at $\bar{x}$, and $u_t$ is lower semicontinuous at every $(\bar{x},y)$ with $y\in\Gamma_t(\bar{x})$, then
$$
\partial v_t(\bar{x})\subset\left\{x^*\in X_t^*\;\left|\;(x^*,0)\in\bigcup_{\bar{y}\in G_t(\bar{x})}\partial\vartheta_t(\bar{x},\bar{y})\right.\right\}.
$$
\end{thm}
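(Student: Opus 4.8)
The plan is to derive Theorem~\ref{lim-val} from Proposition~\ref{prop1} by reading the Bellman equation \eqref{BE} as an instance of the marginal-function scheme \eqref{eq1}. Concretely, I would set $E:=X_t$, $F:=X_{t+1}$, $\varphi(x,y):=u_t(x,y)+v_{t+1}(y)$, and $\Gamma:=\Gamma_t$. By \eqref{BE} the associated marginal function is $\mu=v_t$, the argminimum multifunction \eqref{arg} is exactly the policy multifunction $G_t$ of \eqref{eq2}, and the auxiliary function $\vartheta$ of Subsection~\ref{subseq1} is precisely $\vartheta_t$. With this dictionary, the first inclusion of (i) is a verbatim instance of the first part of Proposition~\ref{prop1}(i) --- whose hypothesis is inner semicontinuity of $G_t$ at $(\bar{x},\bar{y})$ --- and assertion (ii) is a verbatim instance of Proposition~\ref{prop1}(ii), whose hypotheses translate into inner semicompactness of $G_t$ at $\bar{x}$, closedness of $\mathrm{gph}\,\Gamma_t$ at $\bar{x}$, and lower semicontinuity of $\varphi$ at each $(\bar{x},y)$ with $y\in\Gamma_t(\bar{x})$.

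Two points then need attention. First, lower semicontinuity of $\varphi$ in (ii) reduces to that of $u_t$ at the relevant points, which is assumed, together with lower semicontinuity of $v_{t+1}$; the latter is not part of the primitive data but holds under the Berge-type conditions recalled after \eqref{eq2} and may be carried along inductively in $t$. Second, for the strict-differentiability refinement of (i): when $u_t$ is strictly differentiable at $(\bar{x},\bar{y})$ and $v_{t+1}$ is strictly differentiable at $\bar{y}$, the function $\varphi=u_t+v_{t+1}\circ\mathrm{proj}_{X_{t+1}}$ is strictly differentiable at $(\bar{x},\bar{y})$ with $\nabla_x\varphi(\bar{x},\bar{y})=\nabla_x u_t(\bar{x},\bar{y})$ and $\nabla_y\varphi(\bar{x},\bar{y})=\nabla_y u_t(\bar{x},\bar{y})+\nabla v_{t+1}(\bar{y})$, so the second part of Proposition~\ref{prop1}(i) gives $\partial v_t(\bar{x})\subset\nabla_x u_t(\bar{x},\bar{y})+D^*_N\Gamma_t(\bar{x},\bar{y})\big(\nabla_y u_t(\bar{x},\bar{y})+\nabla v_{t+1}(\bar{y})\big)$. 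I would then invoke Corollary~\ref{cor1} at stage $t+1$ (with its viability hypotheses in force) to substitute $\nabla v_{t+1}(\bar{y})=\nabla_x u_{t+1}\big(\bar{y},\gamma_{t+1}(\bar{y})\big)$ for an arbitrary policy mapping $\gamma_{t+1}$, and use that strict differentiability of $v_t$ at $\bar{x}$ forces $\partial v_t(\bar{x})=\{\nabla v_t(\bar{x})\}$; reconciling the resulting coderivative argument with the precise form displayed in the statement uses the minimality of $\bar{y}$ in \eqref{BE} (equivalently, the envelope identity of Corollary~\ref{cor1}).

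I do not expect the subdifferential calculus to be the obstacle here --- Proposition~\ref{prop1} and Corollary~\ref{cor1} have already done that work. The genuine care lies in the bookkeeping: ensuring the hypotheses required to apply Proposition~\ref{prop1} to the composite $\vartheta_t$ are available even though $v_{t+1}$ is itself only a value function (lower semicontinuity of $v_{t+1}$ in (ii); strict differentiability of $v_{t+1}$ in the refined (i)), which is where the inductive structure in $t$, Assumption~\ref{assmp1}, and the viability conditions of Theorem~\ref{thm1} and Corollary~\ref{cor1} must be marshalled; and then tracking the coderivative argument so that the applications of Corollary~\ref{cor1} at stages $t$ and $t+1$ close the loop. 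This last reconciliation --- matching $D^*_N\Gamma_t(\bar{x},\bar{y})\big(\nabla_y u_t(\bar{x},\bar{y})+\nabla_x u_{t+1}(\bar{y},\gamma_{t+1}(\bar{y}))\big)$ against the stated $D^*_N\Gamma_t(\bar{x},\bar{y})\big(\nabla_x u_{t+1}(\bar{y},\gamma_{t+1}(\bar{y}))\big)$ --- is the step I would scrutinize most.
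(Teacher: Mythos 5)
Your proposal follows exactly the route the paper takes: the paper offers no written proof of Theorem~\ref{lim-val}, stating only that it is ``a consequence of Proposition~\ref{prop1} and Corollary~\ref{cor1}'', and your dictionary $E=X_t$, $F=X_{t+1}$, $\varphi(x,y)=u_t(x,y)+v_{t+1}(y)$, $\Gamma=\Gamma_t$, $\mu=v_t$, $G=G_t$, $\vartheta=\vartheta_t$ is precisely the intended instantiation. The first inclusion of (i) and all of (ii) are verbatim applications of Proposition~\ref{prop1}, modulo the bookkeeping you correctly flag: the lower semicontinuity (for (ii)) and strict differentiability (for the refined (i)) of $v_{t+1}$ are not among the theorem's stated hypotheses and must be imported from the Berge-type conditions, respectively from Corollary~\ref{cor1} under its viability assumptions, carried along in $t$.

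The step you single out for scrutiny is the one place where the displayed formula and the argument do not match, and the paper gives no resolution. Proposition~\ref{prop1}(i) with $\nabla_y\varphi(\bar{x},\bar{y})=\nabla_y u_t(\bar{x},\bar{y})+\nabla v_{t+1}(\bar{y})$, combined with Corollary~\ref{cor1} at stage $t+1$, yields
\begin{equation*}
\nabla v_t(\bar{x})\in\nabla_x u_t(\bar{x},\bar{y})+D^*_N\Gamma_t(\bar{x},\bar{y})\big(\nabla_y u_t(\bar{x},\bar{y})+\nabla_x u_{t+1}(\bar{y},\gamma_{t+1}(\bar{y}))\big),
\end{equation*}
whereas the theorem omits $\nabla_y u_t(\bar{x},\bar{y})$ from the coderivative argument. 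Minimality of $\bar{y}$ in \eqref{BE} does not remove this term: it gives only the Euler-type relation $0\in\nabla_y u_t(\bar{x},\bar{y})+\nabla v_{t+1}(\bar{y})+N^\circ(\bar{y};\Gamma_t(\bar{x}))$, which forces $\nabla_y u_t(\bar{x},\bar{y})=-\nabla v_{t+1}(\bar{y})$ only when $(\bar{x},\bar{y})$ is interior to $\mathrm{gph}\,\Gamma_t$ --- and in that case the coderivative would be evaluated at $0$, not at $\nabla_x u_{t+1}(\bar{y},\gamma_{t+1}(\bar{y}))$. So either the published formula carries a typographical omission of $\nabla_y u_t(\bar{x},\bar{y})$, or an unstated hypothesis is being assumed; what your argument actually establishes is the corrected inclusion displayed above, which is the one that genuinely follows from the cited results.
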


The next corollary is in fact a specification of Theorem~\ref{lim-val}.

\begin{cor}[\bf limiting subgradients of the value function under interiority assumptions]\label{lim-val1} The following assertions hold:

{\bf(i)} Assume that $G_t$ is inner semicontinuous at $(\bar{x},\bar{y})\in\mathrm{gph}\,G_t$ and that $(\bar{x},\bar{y})$ is an interior point of $\mathrm{gph}\,\Gamma_t$ at which $u_t$ is strictly differentiable. Then we have the subdifferential inclusion
$$
\partial v_t(\bar{x})\subset\left\{x^*\in X_t^*\mid(x^*,0)\in\nabla u_t(\bar{x},\bar{y})+\big(\{0\}\times\partial v_{t+1}(\bar{y})\big)\right\}.
$$

{\bf(ii)} Assume that $G_t$ is inner semicompact at $\bar{x}\in E_t$, that every $(\bar{x},y)$ with $y\in\Gamma_t(\bar{x})$ belongs to the interior of $\mathrm{gph}\,\Gamma_t$, that the graph of $\Gamma_t$ is closed around $\bar{x}$, that $u_t$ is strictly differentiable at every $(\bar{x},y)$ with $y\in G_t(\bar{x})$, and that $u_t$ is lower semicontinuous at every $(\bar{x},y)$ with $y\in\Gamma_t(\bar{x})$. Then we have
$$
\partial v_t(\bar{x})\subset\left\{ x^*\in X_t^*\;\left|\;(x^*,0)\in\bigcup_{\bar{y}\in G_t(\bar{x})}\Big(\nabla u_t(\bar{x},\bar{y})+\big(\{0\}\times\partial v_{t+1}(\bar{y})\big)\Big)\right.\right\}.
$$
\end{cor}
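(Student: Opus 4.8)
The plan is to obtain both inclusions directly from Theorem~\ref{lim-val} by simplifying the set $\partial\vartheta_t(\bar{x},\bar{y})$ under the additional interiority and strict-differentiability hypotheses. The key preliminary observation is that when $(\bar{x},\bar{y})$ lies in the interior of $\gph\Gamma_t$, the indicator $\delta_{\gph\Gamma_t}$ vanishes identically on a neighborhood of $(\bar{x},\bar{y})$, so near this point $\vartheta_t(x,y)=u_t(x,y)+v_{t+1}(y)$; since the limiting subdifferential is a local construction, $\partial\vartheta_t(\bar{x},\bar{y})$ coincides with the limiting subdifferential of $(x,y)\mapsto u_t(x,y)+v_{t+1}(y)$ at $(\bar{x},\bar{y})$.

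To evaluate the latter, I would write $(x,y)\mapsto v_{t+1}(y)$ as the composition $v_{t+1}\circ\pi_{t+1}$, where $\pi_{t+1}\colon X_t\times X_{t+1}\to X_{t+1}$ is the (surjective, bounded, linear) projection $\pi_{t+1}(x,y):=y$. Since $u_t$ is strictly differentiable at $(\bar{x},\bar{y})$ and $v_{t+1}\circ\pi_{t+1}$ is finite there, the elementary sum rule for the limiting subdifferential from \cite{mo06} --- which holds as an \emph{equality} in arbitrary Banach spaces whenever one summand is strictly differentiable, with no qualification or normal-compactness conditions needed --- gives
$$
\partial\vartheta_t(\bar{x},\bar{y})=\nabla u_t(\bar{x},\bar{y})+\partial(v_{t+1}\circ\pi_{t+1})(\bar{x},\bar{y}).
$$
Next I would invoke the chain rule for surjective bounded linear mappings (alternatively, argue straight from \eqref{fs} and \eqref{bs}: any Fr\'echet $\varepsilon$-subgradient of a function depending only on the second variable has first component of norm at most $\varepsilon$, hence it disappears in the limit defining $\partial$) to obtain $\partial(v_{t+1}\circ\pi_{t+1})(\bar{x},\bar{y})=\pi_{t+1}^*\partial v_{t+1}(\bar{y})=\{0\}\times\partial v_{t+1}(\bar{y})$. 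Combining the two displays yields
$$
\partial\vartheta_t(\bar{x},\bar{y})=\nabla u_t(\bar{x},\bar{y})+\big(\{0\}\times\partial v_{t+1}(\bar{y})\big).
$$

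It then remains to substitute this identity into Theorem~\ref{lim-val}. For assertion~(i), Theorem~\ref{lim-val}(i) applies because $G_t$ is inner semicontinuous at $(\bar{x},\bar{y})\in\gph G_t$, and the computed form of $\partial\vartheta_t(\bar{x},\bar{y})$ turns its conclusion into exactly the claimed inclusion. For assertion~(ii), the hypotheses of Theorem~\ref{lim-val}(ii) are satisfied --- inner semicompactness of $G_t$, closedness of $\gph\Gamma_t$ near $\bar{x}$, and lower semicontinuity of $u_t$ at the points $(\bar{x},y)$ with $y\in\Gamma_t(\bar{x})$ --- while interiority and strict differentiability of $u_t$ are imposed precisely at the points $(\bar{x},\bar{y})$ with $\bar{y}\in G_t(\bar{x})$ that index the union in Theorem~\ref{lim-val}(ii); hence the computation of $\partial\vartheta_t(\bar{x},\bar{y})$ above may be performed term by term inside that union to deliver the stated inclusion.

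I do not expect a genuine obstacle: the corollary is essentially a bookkeeping specialization of Theorem~\ref{lim-val}. The only point deserving care is to invoke the sum rule in its unconditional form valid for a strictly differentiable summand in general Banach spaces, so that no Asplund or sequential-normal-compactness assumption is tacitly introduced; the separable-sum identity $\partial(v_{t+1}\circ\pi_{t+1})(\bar{x},\bar{y})=\{0\}\times\partial v_{t+1}(\bar{y})$ is then routine, as sketched above.
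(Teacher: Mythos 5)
Your proposal is correct and follows essentially the same route as the paper: both reduce the corollary to Theorem~\ref{lim-val} by computing $\partial\vartheta_t(\bar{x},\bar{y})=\nabla u_t(\bar{x},\bar{y})+\big(\{0\}\times\partial v_{t+1}(\bar{y})\big)$ via the unconditional Banach-space sum rule for a strictly differentiable summand (the paper cites \cite[Proposition~1.107]{mo06}), with interiority used to neutralize the indicator of $\mathrm{gph}\,\Gamma_t$. Your explicit verification that $\partial(v_{t+1}\circ\pi_{t+1})(\bar{x},\bar{y})=\{0\}\times\partial v_{t+1}(\bar{y})$ is a detail the paper leaves implicit, but it does not change the argument.
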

\begin{proof}
To verify (i), observe that since $(\bar{x},\bar{y})\in\mathrm{gph}\,G_t$ is an interior point, the indicator function $\delta((\cdot,\cdot);\mathrm{gph}\,\Gamma_t)$ has the strict derivative $0$ at $(\bar{x},\bar{y})$. It follows from Theorem~\ref{lim-val}, or directly from \cite[Proposition 1.107]{mo06}, that
$$
\partial\vartheta_t(\bar{x},\bar{y})=\nabla u_t(\bar{x},\bar{y})+\big(\{0\}\times\partial v_{t+1}(\bar{y})\big),
$$
which justifies assertion (i). Then (ii) immediately follows from (i) due to the fact that $\vartheta_t(\bar{x},\cdot)$ is strictly differentiable on $G_t(\bar{x})$ in this case.
\end{proof}

The last result of this section provides a necessary optimality condition in the DP problem \eqref{DP} formulated in the form of the {\em Euler inclusion} and the construction of the generalized normal cone defined in \eqref{cnc}.

\begin{thm}[\bf Euler inclusion for the deterministic DP model]\label{EI}
Let $X_t$ be a Banach space for each $t\in\NN$, and let $\gamma_{t+1}:X_{t+1}\to X_{t+2}$ be a policy mapping. In addition to the assumptions of Theorem~{\rm\ref{thm1}}, suppose that $u_{t+1}$ is regular at $(\bar{y},\gamma_{t+1}(\bar{y}))$, and that $G_{t+1}$ is locally upper viable around $\bar{y}$. Then we have the following Euler inclusion:
\begin{equation}\label{sei}
0\in\partial^\circ_y u_t(\bar{x},\bar{y})+\partial^\circ_x u_{t+1}\big(\bar{y},\gamma_{t+1}(\bar{y})\big)+N^\circ\big(\bar{y};\Gamma_t(\bar{x})\big).
\end{equation}
\end{thm}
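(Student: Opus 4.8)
The plan is to combine the envelope-type inclusion \eqref{env1} from Theorem~\ref{thm1} with the Bellman equation \eqref{BE} and standard subdifferential calculus for the sum of the cost $u_{t+1}(\cdot,\gamma_{t+1}(\cdot))$ and the indicator of the constraint set $\Gamma_t(\bar x)$. First I would record that the hypotheses of Theorem~\ref{thm1} hold for both the index $t$ and the index $t+1$: $u_t$ and $u_{t+1}$ are locally Lipschitzian around the reference points, $G_t$ is locally lower and upper viable around $\bar x$, and $G_{t+1}$ is locally lower and upper viable around $\bar y$. Hence $v_t$ is locally Lipschitzian around $\bar x$ and $v_{t+1}$ is locally Lipschitzian around $\bar y$, and moreover the generalized gradient inclusion \eqref{env1} applied at stage $t+1$ gives
$$
\partial^\circ v_{t+1}(\bar y)\subset\partial^\circ_x u_{t+1}\big(\bar y,\gamma_{t+1}(\bar y)\big).
$$

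Next I would localize the Bellman equation \eqref{BE} at stage $t$. Since $(\bar x,\bar y)\in\gph G_t$, the point $\bar y$ is a minimizer of $y\mapsto u_t(\bar x,y)+v_{t+1}(y)$ over $y\in\Gamma_t(\bar x)$, i.e.\ $\bar y$ solves
$$
\text{minimize }\; u_t(\bar x,y)+v_{t+1}(y)\;\text{ subject to }\; y\in\Gamma_t(\bar x).
$$
Applying the generalized Fermat rule to this (locally Lipschitzian) problem together with the sum rule and the constraint-qualification-free calculus for Clarke subdifferentials on Banach spaces (see \cite{cl83}), we obtain
$$
0\in\partial^\circ_y u_t(\bar x,\bar y)+\partial^\circ v_{t+1}(\bar y)+N^\circ\big(\bar y;\Gamma_t(\bar x)\big).
$$
Here the regularity of $u_t$ at $(\bar x,\bar y)$ ensures that the partial generalized gradient in the $y$-variable of $u_t(\bar x,\cdot)$ is exactly $\partial^\circ_y u_t(\bar x,\bar y)$, and the sum rule applies in equality form on the regular summand. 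Substituting the inclusion $\partial^\circ v_{t+1}(\bar y)\subset\partial^\circ_x u_{t+1}(\bar y,\gamma_{t+1}(\bar y))$ obtained above into this relation yields precisely the Euler inclusion \eqref{sei}.

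The main obstacle I anticipate is the careful justification of the sum rule in the Banach-space Clarke calculus: the additivity $\partial^\circ(f+g)(\bar y)\subset\partial^\circ f(\bar y)+\partial^\circ g(\bar y)$ requires either convexity or regularity of one summand, and here the indicator of $\Gamma_t(\bar x)$ need not be regular. I would handle this by exploiting the local upper viability of $G_{t+1}$ around $\bar y$—which, via the argument in Theorem~\ref{thm1}, forces $v_{t+1}$ to be sufficiently well-behaved (regular, indeed Clarke-regular as a restriction of the regular $u_{t+1}$)—so that the relevant sum is taken with a regular summand and the calculus goes through; alternatively, one localizes so that the constraint $y\in\Gamma_t(\bar x)$ is inactive in the relevant directions and the normal cone term absorbs the failure of regularity. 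A secondary technical point is verifying that the Fermat rule applies to the unconstrained reformulation $u_t(\bar x,\cdot)+v_{t+1}(\cdot)+\delta_{\Gamma_t(\bar x)}(\cdot)$ and that $N^\circ(\bar y;\Gamma_t(\bar x))=\partial^\circ\delta_{\Gamma_t(\bar x)}(\bar y)$, which is immediate from the definition \eqref{cnc} of the generalized normal cone once $\Gamma_t(\bar x)$ is taken locally closed around $\bar y$.
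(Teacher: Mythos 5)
Your proposal is correct and follows essentially the same route as the paper: minimize $u_t(\bar x,\cdot)+v_{t+1}(\cdot)$ over $\Gamma_t(\bar x)$ via the Bellman equation, apply Clarke's necessary condition for constrained minima together with the sum rule, and then substitute the envelope inclusion \eqref{env1} at stage $t+1$ for $\partial^\circ v_{t+1}(\bar y)$. The obstacle you anticipate is not actually an issue, since for locally Lipschitzian summands the inclusion form of the Clarke sum rule holds without any regularity hypothesis, and the constrained Fermat rule $0\in\partial^\circ f(\bar y)+N^\circ(\bar y;\Gamma_t(\bar x))$ is obtained by exact penalization with the distance function, again with no regularity of the constraint set required.
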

\begin{proof}
Since $\oy$ is a local optimal solution to the constrained minimization problem in \eqref{BE}, we have from \cite[Corollary to Proposition~2.4.3]{cl83} that
$$
0\in\partial^\circ_y\big(u_t(\bar{x},\bar{y})+v_{t+1}(\bar{y})\big)+N^\circ\big(\bar{y};\Gamma_t(\bar{x})\big),
$$
which implies by the calculus rules from \cite[Proposition~2.3.1 and Proposition~2.3.3]{cl83} the validity of the inclusion
$$
\partial^\circ_y\big(u_t(\bar{x},\bar{y})+v_{t+1}(\bar{y})\big)\subset\partial^\circ_y u_t(\bar{x},\bar{y})+\partial^\circ v_{t+1}(\oy).
$$
Taking now any policy mapping $\gamma_{t+1}$ and using Theorem~\ref{thm1} tell us that
\begin{equation}\label{eq3}
\partial^\circ v_{t+1}(\bar{y})\subset\partial^\circ_x u_{t+1}\big(\bar{y},\gamma_{t+1}(\bar{y})\big).
\end{equation}
Combining the latter with the inclusions above, we arrive at \eqref{sei}.
\end{proof}

\section{Stochastic Dynamic Programming}

In this section we develop the stochastic dynamic programming model of our main interest in the paper, establish desired properties of the value function, and derive necessary optimality conditions for this model in terms of the novel stochastic Euler equation. The section is split into four subsections that present, respectively, the functional framework of our model, subdifferentiation of integral functionals, the description of the stochastic DP model, and necessary conditions for optimal strategies.

\subsection{$L^p$ Spaces on Banach Spaces}
Let $(\Omega,\Sigma,\mu)$ be a finite measure space. If $1\le p<\infty$, then $L^p(\mu,E)$ stands for the space of all the $E$-valued Bochner integrable mappings $f$ of the $\mu$-equivalence class defined on $\Omega$ with $\int\|f\|^pd\mu<\infty$, where the norm $\|\cdot\|_p$ is given by $\|f\|_p:=(\int\|f(\omega)\|^pd\mu)^{1/p}$. For $p=\infty$ the notation $L^\infty(\mu,E)$ stands for the space of all the $E$-valued Bochner integrable mappings on $\Omega$ of the $\mu$-equivalence class that are essentially bounded with the norm $\|f\|_\infty:=\mathrm{ess\,sup}_{\omega\in\Omega}\|f(\omega)\|$. If $\Sigma$ is countably generated and $E$ is separable, then $L^p(\mu,E)$ is separable for each $1\le p<\infty$; see \cite[Theorem~2.119]{fl07}.

Recall that a mapping $f:\Omega\to E^*$ is \textit{$\mathit{w}^*\!$-scalarly measurable} if for every $x\in E$ the scalar function $\langle f(\cdot),x\rangle:\Omega\to\RR$ defined by $\omega\mapsto\langle f(\omega),x\rangle$ is measurable. Taking $1\le p\le\infty$, denote by $L^p_{\textit{w}^*}(\mu,E^*)$ the space of $E^*$-valued and $\mathit{w}^*\!$-scalarly measurable mappings of the $\mu$-equivalence class on $\Omega$ such that $\|f(\cdot)\|\in L^p(\mu)$ with the norm $\|f\|_p:=(\int\|f(\omega)\|^pd\mu)^{1/p}$. We know that for each $1\le p<\infty$ the dual space of $L^p(\mu,E)$ is given by $L^q_{\textit{w}^*}(\mu,E^*)$ with the conjugate index $q$ for $p$ such that $1/p+1/q=1$ whenever $E$ is separable, where the dual system is defined by $\langle f,g\rangle:=\int\langle f(\omega),g(\omega) \rangle d\mu$ with $f\in L^q_{\textit{w}^*}(\mu,E^*)$ and $g\in L^p(\mu,E)$; see \cite[Theorem~2.112]{fl07}. Since the strong measurability, a defining property of Bochner integrability in $L^p(\mu,E^*)$, implies the $\mathit{w}^*\!$-scalar measurability, it is evident that $L^p(\mu,E^*)\subset L^p_{\textit{w}^*}(\mu,E^*)$ for each index $1\le p\le\infty$.

The \textit{Radon--\hspace{0pt}Nikodym property} (\textit{RNP}) of Banach space $E$ with respect to a finite measure space $(\Omega,\Sigma,\mu)$ postulates that for every $\mu$-\hspace{0pt}continuous vector measure $\nu:\Sigma\to E$ of bounded variation there exists $f\in L^1(\mu,E)$ such that $\nu(A)=\int_Afd\mu$ whenever $A\in\Sigma$. When the space $E$ enjoys the RNP with respect to every finite measure space, it is simply said to have the RNP. Given $1\le p<\infty$ and its conjugate index $q$, the dual space of $L^p(\mu,E)$ is identified with $L^q(\mu,E^*)$ if and only if $E^*$ has the RNP with respect to $(\Omega,\Sigma,\mu)$, where the duality is given by $\langle f,g\rangle:=\int\langle f(\omega),g(\omega)\rangle d\mu$ for $f\in L^q(\mu,E^*)$ and $g\in L^p(\mu,E)$; see \cite[Theorem~IV.1.1]{du77}. Recall finally that $E$ is an {\em Asplund space} (i.e., a Banach space for which any separable subspace has a separable dual) if and only if $E^*$ enjoys the RNP; see \cite[Theorem~5.2.12]{bo83}. This implies that $L^p(\mu,E^*)$ agrees with $L^p_{\textit{w}^*}(\mu,E^*)$ whenever $E^*$ is separable, which is the case when $E$ is Asplund. Thus $L^p(\mu,E)$ is reflexive with $L^p(\mu,E)^*=L^q(\mu,E^*)$ for every $1<p<\infty$ whenever $E$ is an Asplund space.

\subsection{Subdifferentials of Integral Functionals}\label{sec:sub-int}

Denote by $\B(E)$ the Borel $\sigma$-algebra of $E$ with respect its the norm topology, and let $\varphi:E\times\Omega\to\overline{\RR}$ be a $\B(E)\otimes\Sigma$-measurable integrand. The integral functional under investigation is $I_\varphi:L^p(\mu,E)\to\overline{\RR}$ defined by
$$
I_\varphi(f):=\int_\Omega\varphi(f(\omega),\omega)d\mu.
$$
For $1\le p<\infty$ and $p=\infty$ we impose the following Lipschitz properties of the integrand $\varphi(x,\omega)$ with respect to the first variable, respectively.

\begin{description}
\item[$\mathbf{(H_1)}$] There exists a function $k\in L^q(\mu)$ such that
$$
|\varphi(x,\omega)-\varphi(y,\omega)|\le k(\omega)\|x-y\|\quad\text{for every $x,y\in E$ and $\omega\in\Omega$}.
$$
\item[$\mathbf{(H_2)}$]
Let $\bar{f}\in L^\infty(\mu,E)$ be a point at which $I_\varphi$ is finite. There exist a number $\varepsilon>0$ and a function $k\in L^1(\mu)$ such that
\begin{equation*}
\begin{aligned}
|\varphi(x,\omega)-\varphi(y,\omega)|\le k(\omega)\|x-y\|\quad\text{for every $x,y\in\bar{f}(\omega)+\varepsilon\BB$}\\
\text{and $\omega\in \Omega$},
\end{aligned}
\end{equation*}
where $\BB$ stands for the closed unit ball in $E$.
\end{description}

The next result taken from \citet[Theorems~2.7.3 and 2.7.5]{cl83} is a Lipschitzian extension of the subdifferential formula established in \cite{il72} for the case where $\varphi$ is a normal convex integrand with $p=\infty$.

\begin{prop}[\bf generalized gradients of integral functionals]\label{prop2} Let $E$ be a separable Banach space, and let $\bar{f}\in L^p(\mu,E)$ be a point at which $I_\varphi$ is finite. Then the following holds:

{\bf(i)} Under assumption $\mathrm{(H_1)}$ for $1\le p<\infty$ we have
$$
\partial^\circ I_\varphi(\bar{f})\subset\left\{g\in L^q_{\mathit{w}^*}(\mu,E^*)\mid g(\omega)\in\partial^\circ_x\varphi\big(\bar{f}(\omega),\omega\big)\,\text{a.e.}\ \omega\in\Omega\right\}.
$$

{\bf(ii)} Under assumption $\mathrm{(H_2)}$ for $p=\infty$ we have
$$
\partial^\circ I_\varphi(\bar{f})\subset\left\{g\in L^1_{\mathit{w}^*}(\mu,E^*)\mid g(\omega)\in\partial^\circ_x\varphi\big(\bar{f}(\omega),\omega\big)\,\text{a.e.}\ \omega\in\Omega \right\}.
$$
If furthermore $\varphi(\cdot,\omega)$ is regular at $\bar{f}(\omega)$ for every $\omega\in\Omega$, then $I_\varphi$ is also regular at $\bar{f}$ and the above inclusions hold as equality.
\end{prop}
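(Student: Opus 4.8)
The plan is to follow the standard route for subdifferentiating Lipschitzian integral functionals on $L^p$. The first step is to check that $I_\varphi$ is locally Lipschitzian around $\bar f$, so that $\partial^\circ I_\varphi(\bar f)$ is meaningful: under $\mathrm{(H_1)}$ with $1\le p<\infty$, H\"older's inequality gives $|I_\varphi(f)-I_\varphi(g)|\le\int_\Omega k(\omega)\|f(\omega)-g(\omega)\|\,d\mu\le\|k\|_q\|f-g\|_p$, so $I_\varphi$ is globally Lipschitz with constant $\|k\|_q$; under $\mathrm{(H_2)}$ with $p=\infty$ the same estimate, now valid on the $\varepsilon$-ball of $L^\infty(\mu,E)$ about $\bar f$, yields the Lipschitz property of $I_\varphi$ near $\bar f$ with constant $\|k\|_1$. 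The second and central step is to establish the pointwise bound
$$I_\varphi^\circ(\bar f;h)\le\int_\Omega\varphi_x^\circ\big(\bar f(\omega),\omega;h(\omega)\big)\,d\mu$$
for every admissible $h$. To prove it I would pick sequences $f_n\to\bar f$ and $t_n\downarrow 0$ realizing the $\limsup$ in \eqref{dd} defining $I_\varphi^\circ(\bar f;h)$, pass to a subsequence with $f_n(\omega)\to\bar f(\omega)$ for a.e.\ $\omega$, and write the difference quotient as $\int_\Omega t_n^{-1}\big(\varphi(f_n(\omega)+t_nh(\omega),\omega)-\varphi(f_n(\omega),\omega)\big)\,d\mu$. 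By $\mathrm{(H_1)}$ (resp.\ $\mathrm{(H_2)}$) these integrands are bounded in modulus by $k(\omega)\|h(\omega)\|$, which is integrable by H\"older (resp.\ by essential boundedness of $h$), so the reverse Fatou lemma lets me move the $\limsup$ inside the integral; the pointwise $\limsup$ of the $n$-th difference quotient is then $\le\varphi_x^\circ(\bar f(\omega),\omega;h(\omega))$ by definition of the generalized directional derivative. Measurability of $\omega\mapsto\varphi_x^\circ(\bar f(\omega),\omega;h(\omega))$ is secured by expressing the $\limsup$ over rational $\theta$ and a countable dense set of base points, using joint measurability of $\varphi$ and strong measurability of $\bar f$ and $h$.

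The third step localizes this estimate to recover the pointwise membership. Let $g\in\partial^\circ I_\varphi(\bar f)$; then, since $I_\varphi^\circ(\bar f;\cdot)$ is the support function of $\partial^\circ I_\varphi(\bar f)$ as recorded after \eqref{gg}, the bound just obtained gives $\langle g,h\rangle\le\int_\Omega\varphi_x^\circ(\bar f(\omega),\omega;h(\omega))\,d\mu$ for all admissible $h$. Testing with $h=\mathbf 1_A\cdot x$ (the function equal to $x$ on $A\in\Sigma$ and $0$ elsewhere; this lies in $L^p(\mu,E)$ since $\mu$ is finite) and letting $A$ vary gives $\langle g(\omega),x\rangle\le\varphi_x^\circ(\bar f(\omega),\omega;x)$ for a.e.\ $\omega$, with an exceptional null set depending a priori on $x$. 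Here separability of $E$ enters decisively: choosing a countable dense $D\subset E$, removing a single null set, and using that both sides are continuous in $x$ (the right-hand side being convex and $k(\omega)$-Lipschitz in $x$ by $\mathrm{(H_1)}$/$\mathrm{(H_2)}$, the left-hand side linear) upgrades this to $\langle g(\omega),x\rangle\le\varphi_x^\circ(\bar f(\omega),\omega;x)$ for all $x\in E$ and a.e.\ $\omega$, i.e.\ $g(\omega)\in\partial_x^\circ\varphi(\bar f(\omega),\omega)$ a.e. This proves the inclusions in (i) and in the first part of (ii).

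For the final assertion of (ii) suppose $\varphi(\cdot,\omega)$ is regular at $\bar f(\omega)$ for every $\omega$. For fixed $h$ the difference quotients $t^{-1}\big(\varphi(\bar f(\omega)+th(\omega),\omega)-\varphi(\bar f(\omega),\omega)\big)$ converge as $t\downarrow 0$ to $\varphi_x'(\bar f(\omega),\omega;h(\omega))=\varphi_x^\circ(\bar f(\omega),\omega;h(\omega))$ and are dominated by $k(\omega)\|h\|_\infty\in L^1(\mu)$, so dominated convergence yields $I_\varphi'(\bar f;h)=\int_\Omega\varphi_x^\circ(\bar f(\omega),\omega;h(\omega))\,d\mu$; combined with the bound of the second step and the universal inequality $I_\varphi'(\bar f;h)\le I_\varphi^\circ(\bar f;h)$ this forces equality throughout, so $I_\varphi$ is regular at $\bar f$ and $I_\varphi^\circ(\bar f;h)=\int_\Omega\varphi_x^\circ(\bar f(\omega),\omega;h(\omega))\,d\mu$. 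The reverse inclusion is then immediate: if $g(\omega)\in\partial_x^\circ\varphi(\bar f(\omega),\omega)$ a.e., the support-function identity $\varphi_x^\circ(\bar f(\omega),\omega;h(\omega))=\max_{x^*\in\partial_x^\circ\varphi(\bar f(\omega),\omega)}\langle x^*,h(\omega)\rangle$ together with integration gives $\langle g,h\rangle\le I_\varphi^\circ(\bar f;h)$ for all $h$, hence $g\in\partial^\circ I_\varphi(\bar f)$ (the same computation, with $\|h\|_p$-domination via H\"older, gives regularity and equality in (i) as well).

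I expect the main difficulty to lie in the passage to the limit under the integral sign in the second step together with the localization in the third: one must arrange a.e.\ convergence of the minimizing sequence $f_n$ by extracting a subsequence, guarantee that all $\omega$-dependent quantities are measurable so that Fatou and the integral comparisons apply, and---most delicately---produce a single null set outside which the subgradient inequality holds simultaneously in every direction, which is exactly the point where the separability of $E$ and the Lipschitz hypotheses $\mathrm{(H_1)}$/$\mathrm{(H_2)}$ are indispensable.
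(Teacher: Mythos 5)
The paper does not actually prove this proposition: it is quoted verbatim from Clarke's book (Theorems~2.7.3 and 2.7.5), so there is no in-paper argument to compare against. Your sketch is, in substance, a reconstruction of Clarke's own proof --- local Lipschitz continuity of $I_\varphi$ via H\"older, the inequality $I_\varphi^\circ(\bar f;h)\le\int_\Omega\varphi_x^\circ(\bar f(\omega),\omega;h(\omega))\,d\mu$ via a realizing sequence, an a.e.\ convergent subsequence and reverse Fatou, localization with $h=\mathbf{1}_A x$ plus separability of $E$, and dominated convergence for the regular case --- and for $1\le p<\infty$ it is correct as written, because the identification $L^p(\mu,E)^*=L^q_{\mathit{w}^*}(\mu,E^*)$ (valid for separable $E$, as recorded in Section~4.1) guarantees from the outset that every $g\in\partial^\circ I_\varphi(\bar f)$ has a pointwise representative $g(\omega)$ against which your test functions can be paired.

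There is, however, a genuine gap in part (ii). For $p=\infty$ the dual of $L^\infty(\mu,E)$ is \emph{not} $L^1_{\mathit{w}^*}(\mu,E^*)$: it contains singular (purely finitely additive) functionals. So a generalized gradient $g\in\partial^\circ I_\varphi(\bar f)\subset\big(L^\infty(\mu,E)\big)^*$ does not a priori have values $g(\omega)$, and your third step --- writing $\langle g,\mathbf{1}_A x\rangle=\int_A\langle g(\omega),x\rangle\,d\mu$ and ``letting $A$ vary'' --- presupposes exactly what must be proved. The missing argument is a Radon--Nikod\'ym step: from $|\langle g,h\rangle|\le I_\varphi^\circ(\bar f;h)\vee I_\varphi^\circ(\bar f;-h)\le\int_\Omega k(\omega)\|h(\omega)\|\,d\mu$ one shows that for each $x\in E$ the set function $A\mapsto\langle g,\mathbf{1}_A x\rangle$ is a countably additive, $\mu$-continuous measure dominated by $\|x\|\int_A k\,d\mu$, hence has a density $g_x\in L^1(\mu)$ with $|g_x|\le k\|x\|$; separability of $E$ then lets you assemble the $g_x$ into a single $\mathit{w}^*$-scalarly measurable $g(\cdot)$ with $\|g(\omega)\|\le k(\omega)$ representing $g$ on all of $L^\infty(\mu,E)$. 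This is precisely the content of the Ioffe--Levin/Clarke representation half of Theorem~2.7.5 and cannot be skipped; once it is inserted, the rest of your localization and regularity arguments go through unchanged.
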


The following result taken from \cite[Theorem 3.2]{ch09} is a significant extension of Proposition~\ref{prop2} for the case $p=1$, where the Lipschitz condition $\mathrm{(H_1)}$ is not required under the nonatomicity of the measure space.

\begin{prop}[\bf limiting subgradients of integral functionals]\label{prop3}
Let $E$ be a separable Banach space, and let $(\Omega,\Sigma,\mu)$ be a nonatomic finite measure space. If $I_\varphi$ is finite at $\bar{f}\in L^1(\mu,E)$, then we have the inclusion
\begin{equation}\label{ch}
\partial I_\varphi(\bar{f})\subset\left\{g\in L^\infty_{\mathit{w}^*}(\mu,E^*)\mid g(\omega)\in\partial_x\varphi(\bar{f}(\omega),\omega)\,\text{a.e.}\ \omega\in\Omega\right\}.
\end{equation}
If furthermore $\varphi(\cdot,\omega)$ is regular at $\bar{f}(\omega)$ for every $\omega\in\Omega$, then $I_\varphi$ is also regular at $\bar{f}$ and the above inclusion holds as equality.
\end{prop}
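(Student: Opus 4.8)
The approach to establishing Proposition~\ref{prop3} (beyond quoting \cite{ch09}) is to push the perturbation technique behind Proposition~\ref{prop2} down to the unregularized setting, letting the nonatomicity of $(\Omega,\Sigma,\mu)$ play the role that the Lipschitz bound $k\in L^q(\mu)$ in $(H_1)$ played there; the substantive claim is the inclusion \eqref{ch}, after which the equality under regularity follows by comparison with the Clarke-level formula. To prove \eqref{ch}, take $g\in\partial I_\varphi(\bar f)$. Since $E$ is separable we have $L^1(\mu,E)^*=L^\infty_{w^*}(\mu,E^*)$, so by \eqref{bs} there are sequences $f_k\st{I_\varphi}{\to}\bar f$, $\ve_k\dn 0$ and $g_k\st{w^*}{\to}g$ with $g_k\in\hat\partial_{\ve_k}I_\varphi(f_k)$ for every $k$. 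Two steps remain: (A) a pointwise localization of $\hat\partial_\ve I_\varphi$, and (B) passage to the limit.

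\textbf{Step (A): localization.} I would prove that for every $\ve\ge 0$ and every $f$ with $I_\varphi(f)$ finite,
$$
\hat\partial_\ve I_\varphi(f)\subset\big\{h\in L^\infty_{w^*}(\mu,E^*)\;\big|\;h(\omega)\in\hat\partial_\ve\varphi(f(\omega),\omega)\;\text{a.e.}\ \omega\in\Omega\big\}.
$$
Suppose not: then for some fixed $\delta>0$ and $\rho>0$ the set $A_0$ of those $\omega$ for which there is $x$ with $0<\|x-f(\omega)\|<\rho$ and $\varphi(x,\omega)-\varphi(f(\omega),\omega)-\la h(\omega),x-f(\omega)\ra<-(\ve+\delta)\|x-f(\omega)\|$ has positive measure. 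Since $\varphi$ is $\B(E)\otimes\Sigma$-measurable and $E$ is separable, the von Neumann--Aumann selection theorem provides a measurable $\omega\mapsto x(\omega)$ with these properties on $A_0$; truncating $A_0$ to $\{\,|\varphi(x(\omega),\omega)|\le N\,\}$ (still of positive measure for large $N$) keeps the perturbed integrand integrable. Here the nonatomicity is decisive: for any $\eta>0$ there is $A\subset A_0$ with $0<\mu(A)<\eta$, and $\tilde f:=f+(x(\cdot)-f(\cdot))\mathbf 1_A\in L^1(\mu,E)$ satisfies $\|\tilde f-f\|_1=\int_A\|x(\omega)-f(\omega)\|\,d\mu<\eta\rho$, yet
$$
I_\varphi(\tilde f)-I_\varphi(f)-\la h,\tilde f-f\ra=\int_A\!\big[\varphi(x(\omega),\omega)-\varphi(f(\omega),\omega)-\la h(\omega),x(\omega)-f(\omega)\ra\big]d\mu<-(\ve+\delta)\|\tilde f-f\|_1.
$$
Letting $\mu(A)\dn 0$ contradicts $h\in\hat\partial_\ve I_\varphi(f)$. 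Thus $h(\omega)\in\hat\partial_{\ve+\delta}\varphi(f(\omega),\omega)$ a.e.\ for each $\delta>0$, and intersecting over $\delta$ gives the claim. (On an atomic space this last step fails, which is precisely why $(H_1)$ is indispensable in Proposition~\ref{prop2}.)

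\textbf{Step (B): passage to the limit and the main obstacle.} By Step (A), $g_k(\omega)\in\hat\partial_{\ve_k}\varphi(f_k(\omega),\omega)$ a.e. Passing to a subsequence, $f_k(\omega)\to\bar f(\omega)$ a.e.\ from $L^1$-convergence, and a measure-theoretic argument using lower semicontinuity of the normal integrand $\varphi(\cdot,\omega)$ together with $I_\varphi(f_k)\to I_\varphi(\bar f)$ upgrades this to $\varphi(f_k(\omega),\omega)\to\varphi(\bar f(\omega),\omega)$ a.e. If, along a further subsequence, one also had $g_k(\omega)\st{w^*}{\to}g(\omega)$ for a.e.\ $\omega$, then the definition \eqref{bs} of $\partial_x\varphi(\bar f(\omega),\omega)$ would immediately give $g(\omega)\in\partial_x\varphi(\bar f(\omega),\omega)$ a.e., proving \eqref{ch}. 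Extracting this a.e.\ pointwise weak$^*$ limit is the crux: weak$^*$ convergence $g_k\to g$ in $L^\infty_{w^*}(\mu,E^*)$ does not entail a.e.\ pointwise weak$^*$ convergence, and one cannot restore it by taking convex combinations of the $g_k$ because the sets $\hat\partial_{\ve_k}\varphi(f_k(\omega),\omega)$ are nonconvex---convexifying would only place $g(\omega)$ in $\overline{\co}\,\partial_x\varphi(\bar f(\omega),\omega)$. Overcoming it requires the finer machinery of \cite{ch09}: boundedness of $(g_k)$ via Banach--Steinhaus, metrizability of the weak$^*$ topology on bounded subsets of $E^*$ via separability of $E$, and a Koml\'os/biting-type extraction, together with a diagonalization in which the approximating pairs $(f_k,g_k)$ are reconstructed (e.g.\ by applying Ekeland's variational principle to penalized functionals) rather than taken as given, so as to secure a.e.\ convergence of the selections. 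This is the step I expect to be the main obstacle.

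\textbf{Equality under regularity.} If $\varphi(\cdot,\omega)$ is regular at $\bar f(\omega)$ for a.e.\ $\omega$, then its Fr\'echet, limiting, and Clarke partial subdifferentials there coincide and are convex, so the right-hand side of \eqref{ch} is unchanged when $\partial_x\varphi$ is replaced by $\partial^\circ_x\varphi$. The Clarke-type analogue of \eqref{ch} (in the spirit of Proposition~\ref{prop2}) shows that $I_\varphi$ is itself regular at $\bar f$, whence $\partial I_\varphi(\bar f)=\partial^\circ I_\varphi(\bar f)$, and the reverse inclusion in \eqref{ch} reduces to picking, by measurable selection applied to $\omega\mapsto\partial_x\varphi(\bar f(\omega),\omega)$, a pointwise subgradient and verifying---by integrating the pointwise subgradient inequality over $\Omega$---that it is an element of $\partial I_\varphi(\bar f)$.
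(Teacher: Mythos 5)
The paper does not actually prove this proposition: it is imported verbatim from Chieu \cite[Theorem~3.2]{ch09}, so there is no in-paper argument to match yours against. Judged on its own terms, your Step (A) is a sound reconstruction of the \F-level localization (essentially Chieu's Theorem~3.1): the use of nonatomicity to shrink the support $A$ of the perturbation so that $\|\tilde f-f\|_1\to 0$ while the difference quotient stays below $-(\ve+\delta)$ is exactly the right mechanism, and your parenthetical remark about why this replaces $(\mathrm{H_1})$ is correct. (Modulo routine bookkeeping: you should fix $\delta$ by a countable exhaustion over rational $\delta$ before selecting $x(\omega)$, and ensure $\int_A\|x-f\|\,d\mu>0$ by bounding $\|x(\omega)-f(\omega)\|$ away from zero on a further positive-measure subset.)

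The genuine gap is Step (B), and you have named it yourself. Everything specific to the \emph{limiting} subdifferential --- upgrading $g_k(\omega)\in\hat\partial_{\ve_k}\varphi(f_k(\omega),\omega)$ with $g_k\st{w^*}{\to}g$ in $L^\infty_{w^*}(\mu,E^*)=L^1(\mu,E)^*$ to $g(\omega)\in\partial_x\varphi(\bar f(\omega),\omega)$ a.e.\ --- is left as a list of tools (Banach--Steinhaus, $w^*$-metrizability, Koml\'os/biting, Ekeland) with no argument connecting them. Your own diagnosis shows why this is not a routine omission: weak$^*$ convergence of functionals on $L^1$ gives no a.e.\ pointwise information, and any convexification step would land you only in $\overline{\co}^{\,w^*}\partial_x\varphi(\bar f(\omega),\omega)$, i.e.\ in the Clarke-type conclusion rather than \eqref{ch}. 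Since this passage is precisely the content of the proposition beyond the \F\ case, the proof is incomplete at its crux and the pairs $(f_k,g_k)$ would indeed have to be re-engineered rather than taken as given. A secondary soft spot: in the equality part, ``integrating the pointwise subgradient inequality'' does not literally work for regular/\F\ subgradients, because the $o(\|x-\bar f(\omega)\|)$ remainder depends on $\omega$ and a Fatou-type argument is needed to control the liminf of the integrated quotient; as written, that direction is also only a gesture.
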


\noindent
We refer the reader to \cite{gi17,ms18} for more results and discussions on subdifferentiation of integral functionals and, in particular, comparison between the formulas presented in Propositions~\ref{prop2} and \ref{prop3}.

Next we present a new result on calculating the generalized normal cone \eqref{cnc} to the sets of measurable selections of multifunctions.

\begin{thm}[\bf generalized normals to sets of measurable selections]\label{thm2}
Let $E$ be a separable Banach space, and let $M:\Omega\rightrightarrows E$ be a closed-valued multifunction with $\mathrm{gph}\,M\in\Sigma\otimes\B(E)$. Define $\M:=\{f\in L^p(\mu,E)\mid f(\omega)\in M(\omega) \text{ a.e.\ $\omega\in\Omega$} \}$ with $1\le p<\infty$. If $\bar{f}\in\M$ and $M(\omega)$ is regular at $\bar{f}(\omega)\in E$ a.e.\ $\omega\in\Omega$, then we have the equality
$$
N^\circ(\bar{f};\M)=\left\{g\in L^q_{\mathit{w}^*}(\mu,E^*)\mid g(\omega)\in N^\circ(\bar{f}(\omega);M(\omega)) \text{ a.e.\ $\omega\in \Omega$}\right\}.
$$
\end{thm}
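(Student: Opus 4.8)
\medskip\noindent
The plan is to work throughout with the Lipschitzian distance function and to reduce everything to the fibres $M(\omega)$ by means of the single ``commutation'' identity
$$
d_{\M}(f)=\left(\int_{\Omega}d_{M(\omega)}\big(f(\omega)\big)^{p}\,d\mu\right)^{1/p}\qquad\text{for all }f\in L^{p}(\mu,E).
$$
The inequality ``$\ge$'' holds since $\|f(\omega)-h(\omega)\|\ge d_{M(\omega)}(f(\omega))$ a.e.\ for every $h\in\M$. For ``$\le$'', given $\ve>0$ the closed-valued multifunction $\omega\mapsto\{y\in M(\omega)\mid\|f(\omega)-y\|\le d_{M(\omega)}(f(\omega))+\ve\}$ has a $\Sigma\otimes\B(E)$-measurable graph (separability of $E$ and measurability of $\gph M$ make $\omega\mapsto d_{M(\omega)}(x)$ measurable for each $x$, hence $\omega\mapsto d_{M(\omega)}(f(\omega))$ measurable via simple-function approximation of $f$), so it admits a measurable selection $h_{\ve}$, and $h_{\ve}\in\M$ because $\mu$ is finite and $f,\bar f\in L^{p}(\mu,E)$; then $d_{\M}(f)^{p}\le\int(d_{M(\omega)}(f(\omega))+\ve)^{p}\,d\mu$, and $\ve\downarrow0$ with dominated convergence (dominant $(d_{M(\cdot)}(f(\cdot))+1)^{p}\in L^{1}(\mu)$) gives the identity.

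\smallskip\noindent
For ``$\supset$'' I would compute the contingent cone to $\M$ at $\bar f$. Applying the identity at $\bar f+\theta h$ and using regularity of $M(\omega)$ at $\bar f(\omega)$ a.e.\ (so that $\theta^{-1}d_{M(\omega)}(\bar f(\omega)+\theta h(\omega))\to d'_{M(\omega)}(\bar f(\omega);h(\omega))\ge0$, dominated by $\|h(\omega)\|\in L^{p}(\mu)$), dominated convergence shows that $\theta^{-1}d_{\M}(\bar f+\theta h)$ converges, so
$$
K(\bar f;\M)=\big\{h\mid d'_{M(\omega)}(\bar f(\omega);h(\omega))=0\ \text{a.e.}\big\}=\big\{h\mid h(\omega)\in T^{\circ}(\bar f(\omega);M(\omega))\ \text{a.e.}\big\},
$$
the last step using $T^{\circ}(\bar x;C)=\{v\mid d^{\circ}_{C}(\bar x;v)=0\}$ and $d^{\circ}_{C}=d'_{C}$ under regularity (see \cite{cl83}). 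Since $T^{\circ}(\bar f;\M)\subset K(\bar f;\M)$ and $N^{\circ}(\bar f;\M)$ is the polar cone of $T^{\circ}(\bar f;\M)$ (definition of the Clarke tangent cone plus bipolarity of closed convex cones), passing to polar cones and using that for cone-valued multifunctions the polar commutes with taking $L^{p}$-selections — a $g\in L^{q}_{w^{*}}(\mu,E^{*})$ pairs nonpositively with every $L^{p}$-selection of $\omega\mapsto T^{\circ}(\bar f(\omega);M(\omega))$ iff $g(\omega)\in T^{\circ}(\bar f(\omega);M(\omega))^{\circ}=N^{\circ}(\bar f(\omega);M(\omega))$ a.e., the nontrivial implication being a routine measurable-selection argument — I obtain $N^{\circ}(\bar f;\M)\supset\{g\mid g(\omega)\in N^{\circ}(\bar f(\omega);M(\omega))\ \text{a.e.}\}$.

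\smallskip\noindent
For ``$\subset$'' I would use the supremum representation
$$
d_{\M}(f)=\sup\big\{I_{\varphi_{k}}(f)\ \big|\ 0\le k\in L^{q}(\mu),\ \|k\|_{q}\le1\big\},\qquad\varphi_{k}(x,\omega):=k(\omega)\,d_{M(\omega)}(x),
$$
which holds by the identity above together with convex duality, since $\Phi(f):=d_{M(\cdot)}(f(\cdot))\ge0$ and $u\mapsto\|u^{+}\|_{p}$ is the support function of $\{k\in L^{q}(\mu)\mid k\ge0,\ \|k\|_{q}\le1\}$. Every multiplier is active at $\bar f$ (there $I_{\varphi_{k}}(\bar f)=0=d_{\M}(\bar f)$), so the Clarke subdifferential rule for pointwise suprema \cite{cl83} gives $\partial^{\circ}d_{\M}(\bar f)\subset\overline{\co}^{\,w^{*}}\bigcup\{\partial^{\circ}I_{\varphi_{k}}(\bar f)\mid 0\le k,\ \|k\|_{q}\le1\}$. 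Each $\varphi_{k}$ satisfies $\mathrm{(H_1)}$ with the integrable modulus $k$, so Proposition~\ref{prop2}(i) yields $\partial^{\circ}I_{\varphi_{k}}(\bar f)\subset\{g\mid g(\omega)\in k(\omega)\,\partial^{\circ}d_{M(\omega)}(\bar f(\omega))\ \text{a.e.}\}\subset\{g\mid g(\omega)\in N^{\circ}(\bar f(\omega);M(\omega))\ \text{a.e.}\}$, the last inclusion because $\partial^{\circ}d_{C}(\bar x)\subset N^{\circ}(\bar x;C)$ and $N^{\circ}(\bar x;C)$ is a cone. By the polar-commutation fact this target set is a $w^{*}$-closed convex cone, hence it contains the closed convex hull above, therefore $\partial^{\circ}d_{\M}(\bar f)$, and therefore $N^{\circ}(\bar f;\M)=\overline{\bigcup_{\al>0}\al\,\partial^{\circ}d_{\M}(\bar f)}^{\,w^{*}}$; combined with ``$\supset$'' this gives the asserted equality.

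\smallskip\noindent
The expected main obstacle is the measurability and measurable-selection bookkeeping: measurability of $\omega\mapsto d_{M(\omega)}(f(\omega))$ and of $\omega\mapsto d'_{M(\omega)}(\bar f(\omega);h(\omega))$, existence of the selections $h_{\ve}$, and the polar-commutation lemma for cone-valued multifunctions, together with transferring pointwise a.e.\ conditions to genuine $L^{p}$/$L^{q}$ statements via dominated convergence — all resting on separability of $E$, measurability of $\gph M$, and finiteness of $\mu$. A secondary point of care in ``$\subset$'' is that the multipliers $k$ must be kept \emph{nonnegative}: for $k$ of indefinite sign the set $k(\omega)\,\partial^{\circ}d_{M(\omega)}(\bar f(\omega))$ need not lie in $N^{\circ}(\bar f(\omega);M(\omega))$, which is why the sublinear function $u\mapsto\|u^{+}\|_{p}$, rather than $\|\cdot\|_{p}$, is the right outer function in the supremum representation.
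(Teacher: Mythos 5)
Your route is genuinely different from the paper's. The paper's proof consists of quoting Aubin--Frankowska \cite[Corollary~8.5.2]{af90} for the exact formula $T^\circ(\bar f;\M)=\{h\in L^p(\mu,E)\mid h(\omega)\in T^\circ(\bar f(\omega);M(\omega))\ \text{a.e.}\}$ and then performing precisely the truncation/localization argument that you defer as the ``routine measurable-selection'' half of your polar-commutation lemma: for $g\in N^\circ(\bar f;\M)$ and a selection $h$ of the pointwise tangent cones, restricting $h$ to a set $A$ of positive measure on which $\langle g(\omega),h(\omega)\rangle>0$ produces a tangent vector violating polarity. So the lemma you label as routine is in fact the entire displayed content of the paper's proof, and carrying it out properly also needs a Castaing representation of $\omega\mapsto T^\circ(\bar f(\omega);M(\omega))$ (whose measurability must itself be checked) to pass from ``for every selection $h$, $\langle g(\omega),h(\omega)\rangle\le 0$ a.e.'' to the pointwise polar condition. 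What your approach buys is independence from the hard half of the Aubin--Frankowska formula (that every $L^p$-selection of the pointwise Clarke tangent cones lies in $T^\circ(\bar f;\M)$): your ``$\supset$'' only needs the elementary inclusion $K(\bar f;\M)\subset\{h\mid h(\omega)\in K(\bar f(\omega);M(\omega))\ \text{a.e.}\}$, obtained by extracting an a.e.-convergent subsequence from an $L^p$-null sequence, so the dominated-convergence step and the existence of $d'_{M(\omega)}(\bar f(\omega);\cdot)$ are not actually required there; your distance identity is correct and well justified.

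The genuine gap is in ``$\subset$''. The supremum representation and the reduction to Proposition~\ref{prop2}(i) are fine (and your insistence on nonnegative multipliers is a correct and necessary precaution), but the invocation of Clarke's rule for pointwise suprema over the index set $T=\{k\in L^q(\mu)\mid k\ge 0,\ \|k\|_q\le 1\}$ is not free: that theorem requires $T$ to be sequentially compact in a topology making $k\mapsto I_{\varphi_k}(f)$ upper semicontinuous and, more seriously, requires the multifunction $(k,f)\mapsto\partial^\circ I_{\varphi_k}(f)$ to have a suitably $w^*$-closed graph. With $k_i$ converging only weakly in $L^q$, the needed upper semicontinuity of $(k,f)\mapsto I^\circ_{\varphi_k}(f;h)$ is not obvious and you do not address it; for $p=1$ (so $q=\infty$) even weak$^*$ sequential compactness of the ball of $L^\infty(\mu)$ requires separability of $L^1(\mu)$, which is not among the hypotheses. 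For $p=1$ you should bypass the max rule entirely, since $d_\M=I_\varphi$ with $\varphi(x,\omega)=d_{M(\omega)}(x)$ is itself an integral functional satisfying $\mathrm{(H_1)}$ with $k\equiv 1$; for $1<p<\infty$ your ``$\subset$'' is incomplete until the max-rule hypotheses are verified, whereas the paper obtains this inclusion directly from the cited equality \eqref{eq:tan} together with the truncation argument.
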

\begin{proof}
It follows from \cite[Corollary~8.5.2]{af90} that the generalized tangent cone $T^\circ(\bar{f};\M)$ has the following representation:
\begin{equation}
\label{eq:tan}
T^\circ(\bar{f};\M)=\left\{h\in L^p(\mu,E)\mid h(\omega)\in T^\circ\big(\bar{f}(\omega);M(\omega)\big) \text{ a.e.\ $\omega\in\Omega$} \right\}.
\end{equation}
Take any $g\in N^\circ(\bar{f};\M)$. Suppose that there exist $h\in T^\circ(\bar{f};\M)$ and a set $A\in\Sigma$ with positive measure such that $\langle g(\omega),h(\omega)\rangle>0$ on $A$. Define $\tilde{h}\in L^p(\mu,E)$ by $\tilde{h}(\omega):=h(\omega)$ if $\omega\in A$ and by $\tilde{h}(\omega):=0$ otherwise. Then $\tilde{h}(\omega)\in T^\circ(\bar{f}(\omega);M(\omega))$ a.e.\ $\omega\in\Omega$, and hence $\tilde{h}\in T^\circ(\bar{f};\M)$. This means that $0<\int_A\langle g(\omega),h(\omega)\rangle d\mu=\langle g,\tilde{h} \rangle\le 0$, which contradicts the fact that $g$ belongs to $N^\circ(\bar{f};\M)$. Therefore $\langle g(\omega),h(\omega)\rangle\le 0$ a.e.\ $\omega\in \Omega$ for every $h\in L^p(\mu,E)$ with $h(\omega)\in T^\circ(\bar{f}(\omega);M(\omega))$ The latter yields $g(\omega)\in N^\circ(\bar{f}(\omega);M(\omega))$ a.e.\ $\omega\in\Omega$. The converse inclusion immediately follows from \eqref{eq:tan}.
\end{proof}

\subsection{Description of the Stochastic DP Model}

Now we are ready to describe the nonstationary stochastic DP model of our study in this paper. Our approach is based on the deterministic reduction outlined in \cite{pa94,ta92} for $L^\infty$ spaces.

Let $(\Omega,\Sigma,\mu)$ be a complete probability space, where $\Omega$ is a sample space, let $\Sigma$ is a $\sigma$-\hspace{0pt}algebra of subsets of $\Omega$, and let $\mu$ is a complete probability measure on $\Sigma$. By $\{\Sigma_t \}_{t\in\NN}$ we denote a filtration: $\Sigma_t\subset\Sigma_{t+1}\subset\cdots$ for each $t\in\NN$ with $\Sigma_t$ being a complete sub-$\sigma$-\hspace{0pt}algebra of $\Sigma$ such that $\bigvee_{t\in\NN}\Sigma_t=\Sigma$, where $\bigvee_{t\in\NN}\Sigma_t$ stands for the $\sigma$-algebra generated by $\bigcup_{t\in\NN}\Sigma_t$ and $\Sigma_t$ is the information system available to the decision-maker up to the period $t\in\NN$. Having a sequence of Banach spaces $\{E_t\}_{t\in\NN}$, for every $1\le p\le \infty$ denote by $L^p(\Sigma_t,\mu;E_{t+1})$ the space of $E_{t+1}$-\hspace{0pt}valued, $\Sigma_t$-measurable, and Bochner integrable mappings $f$ on $\Omega$ with $\int\|f(\omega)\|^pd\mu<\infty$ and by $L^p_{\mathit{w}^*}(\Sigma_t,\mu;E_t^*)$ the space of $E_t^*$-\hspace{0pt}valued and $\mathit{w}^*\!$-scalarly measurable mappings $f$ on $\Omega$ with respect to $\Sigma_t$ such that $\|f(\omega)\|\in L^p(\mu)$.

The primitive of the model is described by a filtration $\{\Sigma_t\}_{t\in\NN}$ of a probability space $(\Omega,\Sigma,\mu)$, a sequence $\{E_t\}_{t\in\NN}$ of Banach spaces, a random multifunction $\Phi_t:E_t\times\Omega\rightrightarrows E_{t+1}$, and a random cost function $\varphi_t:E_t\times E_{t+1}\times\Omega\to\overline{\RR}$. Given an initial condition $f_0\in L^p(\Sigma_0,\mu;E_0)$, an adapted stochastic process $\{f_t\}_{t\in\NN}$ with $f_{t+1}\in L^p(\Sigma_t,\mu;E_{t+1})$ and $f_{t+1}(\omega)\in\Phi_{t}(f_t(\omega),\omega)$ a.e.\ $\omega\in\Omega$ for each $t\in\NN$ is called an \textit{admissible program}. The stochastic DP problem under investigation is defined as follows:
\begin{equation}\label{SDP}
\begin{aligned}
&\inf{}\sum_{t\in \NN}\int_\Omega\varphi_t(f_t(\omega),f_{t+1}(\omega),\omega)d\mu\\
&\text{s.t. } f_{t+1}(\omega)\in\Phi_t(f_t(\omega),\omega) \text{ a.e.\ $\omega\in\Omega$},\\
&\hspace{0.8cm}f_{t+1}\in L^p(\Sigma_t,\mu;E_{t+1}) \text{ for each $t\in\NN$},\\
&\hspace{0.8cm}f_0\in L^p(\Sigma_0,\mu;E_0).
\end{aligned}
\end{equation}
We impose the following standing requirements on the initial data of \eqref{SDP}.

\begin{assmp}\label{assmp2} For the stochastic DP model \eqref{SDP}, suppose that:

{\bf(i)} $\mathrm{gph}\,\Phi_t$ belongs to $\B(E_t)\otimes\Sigma_t\otimes\B(E_{t+1})$.

{\bf(ii)} $\varphi_t:E_t\times E_{t+1}\times\Omega\to\overline{\RR}$ is $\B(E_t)\otimes\B(E_{t+1})\otimes\Sigma_t$-measurable.

{\bf(iii)} There exists a function $\alpha\in L^1(\mu)$ such that
$$
\sum_{t\in\NN}\sup_{(x,y)\in\mathrm{gph}\,\Phi_t(\cdot,\cdot,\omega)}|\varphi_t(x,y,\omega)|\le\alpha(\omega)\quad\text{for every $\omega\in\Omega$}.
$$
\end{assmp}

\noindent
Observe that due to the measurability condition (ii) in Assumption~\ref{assmp2}, for any $(f,g)\in L^p(\Sigma_{t-1},\mu;E_t)\times L^p(\Sigma_t,\mu;E_{t+1})$ we can easily deduce that the random cost function $\omega\mapsto \varphi_t(f(\omega),g(\omega),\omega)$ and the random multifunction $\omega\mapsto\Phi_t(f(\omega),g(\omega),\omega)$ are $\Sigma_t$-measurable.

To transform \eqref{SDP} into the deterministic DP problem of type \eqref{DP} in the Banach space setting, it is sufficient to make the notational change by denoting $X_{t}:=L^p(\Sigma_{t-1},\mu;E_{t})$ for each $t\in\NN$ with $\Sigma_{-1}:\equiv\Sigma_0$ and then defining the multifunction $\Gamma_t:L^p(\Sigma_{t-1},\mu;E_t)\rightrightarrows L^p(\Sigma_t,\mu;E_{t+1})$ by
\[
\Gamma_t(f):=\left\{g\in L^p(\Sigma_t,\mu;E_{t+1})\mid g(\omega)\in\Phi_t(f(\omega),\omega) \text{ a.e.\ $\omega\in \Omega$}\right\}
\]
and the cost function $u_t:L^p(\Sigma_{t-1},\mu;E_t)\times L^p(\Sigma_t,\mu;E_{t+1})\to \overline{\RR}$ by
\begin{equation}\label{ut}
u_t(f,g):=\int_\Omega\varphi_t(f(\omega),g(\omega),\omega)d\mu.
\end{equation}
In this way we get the deterministic DP model in Banach spaces written as
\begin{equation}\label{DP-deter}
\begin{aligned}
&\inf\sum_{t\in\NN}u_t\big(f_t,f_{t+1}\big)\\
& \text{s.t.\ $f_{t+1}\in\Gamma_t(f_t)$ for each $t\in\NN$, $f_0\in L^p(\Sigma_0,\mu;E_{0})$}
\end{aligned}
\end{equation}
with the value function $v_t:L^p(\Sigma_t,\mu;E_{t+1})\to\overline{\RR}$ given by
\begin{equation}\label{svf}
v_t(f):=\inf_{(f_t,f_{t+1},\dots)\in\A_t(f)}\sum_{s=t}^\infty u_s(f_s,f_{s+1}).
\end{equation}
While the optimality of admissible programs is \eqref{DP-deter} is formulated, the Bellman principle of optimality \eqref{BP} and the Bellman equation \eqref{BE} are valid in this framework being discussed in Section~\ref{sec:DP} together with the policy multifunction $G_t:L^p(\Sigma_{t-1},\mu;E_t)\rightrightarrows L^p(\Sigma_{t},\mu;E_{t+1})$ and the policy mapping $\gamma_t:L^p(\Sigma_{t-1},\mu;E_t)\to L^p(\Sigma_{t},\mu;E_{t+1})$ that are defined similarly to \eqref{eq2}. All of this being combined with the Leibniz-type rules for the subdifferentiation of integral functions given in Subsection~\ref{sec:sub-int} allows us to derive necessary optimality conditions for the stochastic DP \eqref{SDP} in the next subsections.

\subsection{Necessary Optimality Conditions for Stochastic DP}

In this subsection we establish necessary optimality conditions for the stochastic DP problem \eqref{SDP} while concentrating on the conditions expressed in terms of the generalized gradient \eqref{gg}. Extensions of the obtained results to the case of the limiting subdifferential \eqref{bs} is an open question due to the absence of the Asplund property for the space $L^1(\mu,E)$ in Proposition~\ref{prop3}; see Section~\ref{conc} for more discussions and references.

To proceed further, we need the following Lipschitzian assumption on the cost function $\varphi(x,y,\omega)$ in \eqref{SDP} with respect to first two variables:

\begin{assmp}\label{assmp3}
There exists a function $k_t\in L^q(\mu)$ such that
$$
|\varphi_t(x,y,\omega)-\varphi_t(x',y',\omega)|\le k_t(\omega)(\| x-x'\|+\| y-y'\|)
$$
for every $x,x'\in E_t$, $y,y'\in E_{t+1}$, and $\omega\in\Omega$.
\end{assmp}

\noindent
It is easy to see that Assumption~\ref{assmp3} guarantees that the function $u_t$ in \eqref{ut} is Lipschitz continuous of rank $\int k_td\mu$. If moreover $\varphi_t(\cdot,\cdot,\omega)$ is regular at every $(x,y)\in E_t\times E_{t+1}$ for every $\omega\in \Omega$, then $u_t$ is also regular at every $(f,g)\in L^p(\Sigma_{t-1},\mu;E_t)\times L^p(\Sigma_t,\mu;E_{t+1})$.\vspace*{0.05in}

The next theorem presents a necessary optimality condition for the stochastic DP model \eqref{SDP} given in terms of the generalized gradient inclusion for the value functions with justifying its Lipschitz continuity as well as strict differentiability under an additional assumption. Recall that $G_t$ stands below for the policy multifunction introduced in \eqref{eq2}, and that the viability conditions are taken from Definition~\ref{viab}.

\begin{thm}[\bf generalized gradient inclusion for the value function in stochastic DP]\label{mt1}
Let $E_t$ be a separable Banach space for each $t\in\NN$, and let $(\bar{f},\bar{g})\in L^p(\Sigma_{t-1},\mu;E_{t})\times L^p(\Sigma_{t},\mu;E_{t+1})$ with $1\le p\le\infty$ be such that $\bar{g}\in G_t(\bar{f})$. Assume that $G_t$ is locally upper viable around $\bar{f}$ and $\varphi_t(\cdot,\cdot,\omega)$ is regular at every $(x,y)\in E_t\times E_{t+1}$ for every $\omega\in\Omega$. Under Assumptions~{\rm\ref{assmp2}} and {\rm\ref{assmp3}} we have that the value function $v_t$ from \eqref{svf} is Lipschitz continuous on $E_t\times E_{t+1}$ and satisfies the generalized gradient inclusion
$$
\partial^\circ v_t(\bar{f})\subset\left\{h\in L^q_{\mathit{w}^*}(\Sigma_{t},\mu;E_t^*)\mid h(\omega)\in\partial^\circ_x\varphi_t\big(\bar{f}(\omega),\bar{g}(\omega),\omega\big) \ \text{a.e.}\ \omega\in\Omega\right\}.
$$
If furthermore $\varphi_t(\cdot,\cdot,\omega)$ is strictly differentiable at every point $(x,y)\in E_t\times E_{t+1}$ a.e.\ $\omega\in\Omega$, then $v_t$ is strictly differentiable at $\bar{f}$ with
$$
\nabla v_t(\bar{f})=\nabla_x \varphi_t\big(\bar{f}(\cdot),\bar{g}(\cdot),\cdot\big)\in L^q_{\mathit{w}^*}(\Sigma_{t},\mu;E_t^*).
$$
\end{thm}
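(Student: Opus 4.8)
The strategy is to recognize Theorem~\ref{mt1} as the $L^p$-space instance of the deterministic results already in hand (Theorem~\ref{thm1} and Corollary~\ref{cor1}), combined with the Leibniz-type subdifferential formulas for integral functionals (Proposition~\ref{prop2}). The reduction of Section~4.3 identifies the stochastic DP~\eqref{SDP} with the deterministic DP~\eqref{DP-deter} on the Banach spaces $X_t:=L^p(\Sigma_{t-1},\mu;E_t)$, so the first task is simply to check the hypotheses of Theorem~\ref{thm1} in this setting. By Assumption~\ref{assmp3} the integrand $\varphi_t(\cdot,\cdot,\omega)$ is globally Lipschitz with modulus $k_t(\omega)$, $k_t\in L^q(\mu)$, which is precisely hypothesis $\mathrm{(H_1)}$ (or $\mathrm{(H_2)}$ when $p=\infty$) applied to the integrand $(x,y)\mapsto\varphi_t(x,y,\omega)$ on the product space $E_t\times E_{t+1}$. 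Hence $u_t$ from \eqref{ut} is globally (in particular locally) Lipschitzian on $X_t\times X_{t+1}$ with modulus $\int k_t\,d\mu$, as already noted after Assumption~\ref{assmp3}; Assumption~\ref{assmp2}(iii) supplies the summability condition (Assumption~\ref{assmp1}) needed for the value function \eqref{svf} to be well defined and finite. Since $G_t$ is locally lower viable around $\bar f$ (this follows from local upper viability, or can be assumed outright), Theorem~\ref{thm1} applies verbatim and gives the local Lipschitz continuity of $v_t$ around $\bar f$.

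The second task is the generalized gradient inclusion. The regularity of $\varphi_t(\cdot,\cdot,\omega)$ at every point forces, via the ``furthermore'' clause of Proposition~\ref{prop2}, that $u_t$ is regular at $(\bar f,\bar g)$ as a function on $X_t\times X_{t+1}$; in fact that clause yields the equality
$$
\partial^\circ u_t(\bar f,\bar g)=\left\{h\in L^q_{\mathit{w}^*}(\Sigma_t,\mu;E_t^*)\times L^q_{\mathit{w}^*}(\Sigma_t,\mu;E_{t+1}^*)\mid h(\omega)\in\partial^\circ\varphi_t\big(\bar f(\omega),\bar g(\omega),\omega\big)\ \text{a.e.}\right\}.
$$
With $u_t$ regular at $(\bar f,\bar g)\in\mathrm{gph}\,G_t$ and $G_t$ locally upper viable around $\bar f$, the second assertion of Theorem~\ref{thm1} gives the deterministic inclusion $\partial^\circ v_t(\bar f)\subset\partial^\circ_x u_t(\bar f,\bar g)$. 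It then remains to identify the partial generalized gradient $\partial^\circ_x u_t(\bar f,\bar g)$ as the set of measurable selections $h$ with $h(\omega)\in\partial^\circ_x\varphi_t(\bar f(\omega),\bar g(\omega),\omega)$ a.e.: this is exactly Proposition~\ref{prop2}(i) applied to the integrand $x\mapsto\varphi_t(x,\bar g(\omega),\omega)$ (with $\bar g$ frozen), whose regularity at $\bar f(\omega)$ follows from that of $\varphi_t(\cdot,\cdot,\omega)$. Chaining these inclusions delivers the displayed estimate on $\partial^\circ v_t(\bar f)$.

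For the strict-differentiability conclusion, strict differentiability of $\varphi_t(\cdot,\cdot,\omega)$ a.e.\ implies (again by \cite{cl83}) that $\partial^\circ\varphi_t(\bar f(\omega),\bar g(\omega),\omega)=\{\nabla\varphi_t(\bar f(\omega),\bar g(\omega),\omega)\}$ is a singleton a.e.; the partial version then gives $\partial^\circ_x\varphi_t(\bar f(\omega),\bar g(\omega),\omega)=\{\nabla_x\varphi_t(\bar f(\omega),\bar g(\omega),\omega)\}$. Since $\varphi_t(\cdot,\bar g(\omega),\omega)$ is regular at $\bar f(\omega)$, Proposition~\ref{prop2}(i) shows $u_t(\cdot,\bar g)$ has a singleton generalized gradient at $\bar f$, equal to the measurable map $\omega\mapsto\nabla_x\varphi_t(\bar f(\omega),\bar g(\omega),\omega)$, which lies in $L^q_{\mathit{w}^*}(\Sigma_t,\mu;E_t^*)$ because $\|\nabla_x\varphi_t(\bar f(\omega),\bar g(\omega),\omega)\|\le k_t(\omega)\in L^q(\mu)$. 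A singleton generalized gradient for a locally Lipschitzian function is equivalent to strict differentiability, so $u_t(\cdot,\bar g)$ is strictly differentiable at $\bar f$; Corollary~\ref{cor1} then yields that $v_t$ is strictly differentiable at $\bar f$ with $\nabla v_t(\bar f)=\nabla_x u_t(\bar f,\bar g)=\nabla_x\varphi_t(\bar f(\cdot),\bar g(\cdot),\cdot)$.

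The main obstacle is not conceptual but measure-theoretic bookkeeping: one must make sure that ``partial regularity in $x$ at a.e.\ $\omega$'' and the Lipschitz bound on the partial generalized gradients are genuinely inherited from the joint hypotheses on $\varphi_t(\cdot,\cdot,\omega)$, and that the $\B(E_t)\otimes\B(E_{t+1})\otimes\Sigma_t$-measurability in Assumption~\ref{assmp2}(ii) is enough to legitimize all the selection and integrability arguments underlying Proposition~\ref{prop2}. In particular, when $p=\infty$ one uses hypothesis $\mathrm{(H_2)}$ and Proposition~\ref{prop2}(ii) in place of $\mathrm{(H_1)}$ and Proposition~\ref{prop2}(i); since $\bar f$ and $\bar g$ are essentially bounded, Assumption~\ref{assmp3} certainly furnishes the required local Lipschitz estimate near their ranges, so this case is handled in parallel. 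Everything else is a direct transcription of the deterministic theory to the $L^p$ setting.
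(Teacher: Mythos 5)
Your proposal is correct and follows essentially the same route as the paper: reduce to the deterministic model, apply Theorem~\ref{thm1} (and Corollary~\ref{cor1} for strict differentiability) to get $\partial^\circ v_t(\bar f)\subset\partial^\circ_x u_t(\bar f,\bar g)$, and then identify $\partial^\circ_x u_t(\bar f,\bar g)=\partial^\circ I_{\bar\varphi_t}(\bar f)$ via Proposition~\ref{prop2} applied to the frozen integrand $\bar\varphi_t(x,\omega)=\varphi_t(x,\bar g(\omega),\omega)$. Your extra care about local lower viability (needed for the Lipschitz claim in Theorem~\ref{thm1}) and about the $p=\infty$ case via $(\mathrm{H_2})$ is a welcome tightening of hypotheses that the paper's own proof passes over silently.
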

\begin{proof}
Define $\bar{\varphi}_t:E_t\times \Omega\to \overline{\RR}$ by $\bar{\varphi}_t(x,\omega):=\varphi_t(x,\bar{g}(\omega),\omega)$ and consider the integral functional $I_{\bar{\varphi}_t}:L^p(\Sigma_{t-1},\mu;E_t)\to\overline{\RR}$ given by
$$
I_{\bar{\varphi}_t}(f):=\int\bar{\varphi}_t(f(\omega),\omega)d\mu.
$$
Since $\bar{f}$ and $\bar{\varphi}_t(\bar{f}(\cdot),\cdot)$ are $\Sigma_t$-measurable, it follows from the above deterministic reduction with the usage of Theorem~\ref{thm1} and Proposition~\ref{prop2} that $v_t$ is Lipschitz continuous on $E_t\times E_{t+1}$ and satisfies the relationships
\begin{equation*}
\begin{aligned}
\partial^\circ v_t(\bar{f})
&\subset\partial^\circ_x u_t(\bar{f},\bar{g})=\partial^\circ I_{\bar{\varphi}_t}(\bar{f})\\
&\subset\left\{ h\in L^q_{\mathit{w}^*}(\Sigma_{t},\mu;E_t^*)\mid h(\omega)\in\partial^\circ_x \bar{\varphi}_t(\bar{f}(\omega),\omega) \ \text{a.e.}\ \omega\in\Omega\right\}\\
&=\left\{h\in L^q_{\mathit{w}^*}(\Sigma_{t},\mu;E_t^*)\mid h(\omega)\in\partial^\circ_x\varphi_t(\bar{f}(\omega),\bar{g}(\omega),\omega) \ \text{a.e.}\ \omega\in\Omega\right\},
\end{aligned}
\end{equation*}
which bring us to the claim inclusion for $\partial^\circ v_t(\bar{f})$. The strict differentiability of $v_t$ follows from the above procedure with the usage of Corollary~\ref{cor1}.
\end{proof}

Finally in this section, we arrive at the following state-dependent stochastic Euler inclusion obtained in terms of the given data of \eqref{SDP}. Observe that our necessary optimality condition does not involve any integration operation, i.e., expectation in the probabilistic sense. This is a sharp contrast with the results obtained in \cite{pa94,ta92}.

\begin{thm}[\bf stochastic Euler equation]\label{mt2}
Let $E_t$ be a separable Banach space for each $t\in\NN$, and let the pair $(\bar{f},\bar{g})\in L^p(\Sigma_{t-1},\mu;E_{t})\times L^p(\Sigma_{t},\mu;E_{t+1})$ with $1\le p\le\infty$ be such that $\bar{g}\in G_t(\bar{f})$. Assume that the set $\Phi_t(\bar{f}(\omega),\omega)$ is regular at $\bar{g}(\omega)$ and closed a.e.\ $\omega\in\Omega$, that the multifunction $G_t$ is locally upper viable around $\bar{f}$, and that the cost function $\varphi_t(\cdot,\cdot,\omega)$ is regular at every $(x,y)\in E_t\times E_{t+1}$ for each $\omega\in\Omega$. Then under Assumptions~{\rm\ref{assmp2}} and {\rm\ref{assmp3}} we have the stochastic Euler inclusion
\begin{equation}\label{sei1}
\begin{aligned}
0\in\partial^\circ_y\varphi_t\big(\bar{f}(\omega),\bar{g}(\omega),\omega\big)
&+\partial^\circ_x\varphi_{t+1}\big(\bar{g}(\omega),\gamma_{t+1}(\bar{g})(\omega),\omega\big)\\
&+N^\circ\big(\bar{g}(\omega);\Phi_t(\bar{f}(\omega),\omega)\big)\quad \text{a.e.\ $\omega\in\Omega$}.
\end{aligned}
\end{equation}
\end{thm}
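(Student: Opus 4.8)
The plan is to carry out the deterministic reduction described above, to apply the deterministic Euler inclusion \eqref{sei} of Theorem~\ref{EI} to the reduced model (with action spaces $X_t=L^p(\Sigma_{t-1},\mu;E_t)$, cost functions $u_t$ from \eqref{ut}, and feasibility maps $\Gamma_t$), and then to \emph{disintegrate} each of the three terms of \eqref{sei} into its a.e.\ pointwise counterpart by means of the Leibniz-type subdifferential formula for integral functionals (Proposition~\ref{prop2}) and the normal-cone formula for sets of measurable selections (Theorem~\ref{thm2}).

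First I would check that the hypotheses of Theorem~\ref{EI} hold for the reduced model at $(\bar{f},\bar{g})$. Assumption~\ref{assmp3} makes each $u_t$ globally Lipschitz of rank $\int k_t\,d\mu$ on $X_t\times X_{t+1}$, and joint regularity of $\varphi_t(\cdot,\cdot,\omega)$ gives regularity in each variable separately and, via the equality assertion in Proposition~\ref{prop2}, regularity of $u_t$ at the reference points, exactly as recorded after Assumption~\ref{assmp3}. Together with the (upper) viability of $G_t$ around $\bar{f}$ and of $G_{t+1}$ around $\bar{g}$, and the regularity of $\varphi_{t+1}(\cdot,\cdot,\omega)$, this supplies all the inputs of Theorem~\ref{EI} for the reduced model and yields
$$0\in\partial^\circ_y u_t(\bar{f},\bar{g})+\partial^\circ_x u_{t+1}\big(\bar{g},\gamma_{t+1}(\bar{g})\big)+N^\circ\big(\bar{g};\Gamma_t(\bar{f})\big),$$
where all three sets consist of maps in the space $L^q_{\mathit{w}^*}(\Sigma_t,\mu;E_{t+1}^*)$.

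Next I would identify the three summands pointwise. Writing $u_t(\bar{f},\cdot)=I_{\psi_t}$ with $\psi_t(y,\omega):=\varphi_t(\bar{f}(\omega),y,\omega)$, which is $\B(E_{t+1})\otimes\Sigma_t$-measurable, which satisfies $(H_1)$ for $1\le p<\infty$ (respectively $(H_2)$ for $p=\infty$) by Assumption~\ref{assmp3}, and which is regular in $y$ a.e.\ $\omega$, the equality case of Proposition~\ref{prop2} gives
$$\partial^\circ_y u_t(\bar{f},\bar{g})=\big\{h\in L^q_{\mathit{w}^*}(\Sigma_t,\mu;E_{t+1}^*)\;\big|\;h(\omega)\in\partial^\circ_y\varphi_t(\bar{f}(\omega),\bar{g}(\omega),\omega)\ \text{a.e.}\ \omega\big\},$$
and the same computation applied to the integrand $x\mapsto\varphi_{t+1}(x,\gamma_{t+1}(\bar{g})(\omega),\omega)$ evaluates $\partial^\circ_x u_{t+1}(\bar{g},\gamma_{t+1}(\bar{g}))$. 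For the normal-cone term I would observe that $\Gamma_t(\bar{f})$ is exactly the set $\M$ of Theorem~\ref{thm2} associated with the multifunction $M(\omega):=\Phi_t(\bar{f}(\omega),\omega)$: its values are closed a.e.\ by hypothesis, and its graph lies in $\Sigma_t\otimes\B(E_{t+1})$ by Assumption~\ref{assmp2}(i) and the $\Sigma_t$-measurability of $\bar{f}$. Since $M(\omega)$ is regular at $\bar{g}(\omega)$ a.e., Theorem~\ref{thm2} gives
$$N^\circ\big(\bar{g};\Gamma_t(\bar{f})\big)=\big\{h\in L^q_{\mathit{w}^*}(\Sigma_t,\mu;E_{t+1}^*)\;\big|\;h(\omega)\in N^\circ\big(\bar{g}(\omega);\Phi_t(\bar{f}(\omega),\omega)\big)\ \text{a.e.}\ \omega\big\}.$$
Choosing $h_1,h_2,h_3$ in these three sets with $h_1+h_2+h_3=0$ and reading the resulting identity off at a.e.\ $\omega$ then produces \eqref{sei1}.

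The step I expect to be the main obstacle is the passage to the pointwise level. One has to confirm that joint regularity of $\varphi_t(\cdot,\cdot,\omega)$ really yields the \emph{equality} (not merely the inclusion) versions of Proposition~\ref{prop2} and Theorem~\ref{thm2} for the restricted integrands and multifunctions; one has to handle the measurability bookkeeping that lets Theorem~\ref{thm2}, stated on $L^p(\mu,E)$, be applied inside the $\Sigma_t$-measurable subspace $L^p(\Sigma_t,\mu;E_{t+1})$, so that the generalized tangent and normal cones computed there coincide with the pointwise ones; and one has to dispose of the endpoint case $p=\infty$, for which Proposition~\ref{prop2}(ii) covers the two subdifferential terms but the normal-cone identity needs an $L^\infty$-analogue of Theorem~\ref{thm2}. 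Once these are in place, the final disintegration from an a.e.\ identity of dual-valued maps to an a.e.\ inclusion is routine, and the notable feature that \eqref{sei1} involves no expectation is automatic, since each ingredient is already a pointwise statement.
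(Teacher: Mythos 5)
Your proposal follows essentially the same route as the paper: apply the deterministic Euler inclusion of Theorem~\ref{EI} to the reduced model and then disintegrate the three summands pointwise via Proposition~\ref{prop2} and Theorem~\ref{thm2}. One small remark: for this direction only the inclusion ($\subset$) versions of those pointwise formulas are needed, not the equalities, since you start from $h_1+h_2+h_3=0$ in the $L^p$ space and read it off a.e.\ $\omega$, so your main stated worry is not actually an obstacle --- although your observation that Theorem~\ref{thm2} is formulated only for $1\le p<\infty$ while the theorem claims $1\le p\le\infty$ points to a genuine loose end that the paper's own proof also leaves unaddressed.
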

\begin{proof}
Consider $\bar{\varphi}_t:E_{t+1}\times\Omega\to\overline{\RR}$ given by $\bar{\varphi}_t(y,\omega):=\varphi_t(\bar{f}(\omega),y,\omega)$ and define the integral functional $I_{\bar{\varphi}_t}:L^p(\Sigma_{t},\mu;E_{t+1})\to\overline{\RR}$ by
$$
I_{\bar{\varphi}_t}(g):=\int\bar{\varphi}_t\big(g(\omega),\omega\big)d\mu.
$$
Then we deduce from Proposition~\ref{prop2} that
\begin{equation*}
\begin{aligned}
\partial^\circ_y u_t(\bar{f},\bar{g})
&=\partial^\circ I_{\bar{\varphi}_t}(\bar{g})\\
&\subset\left\{h\in L^q_{\mathit{w}^*}(\Sigma_{t},\mu;E_{t+1}^*)\mid h(\omega)\in\partial^\circ_x\bar{\varphi}_t\big(\bar{g}(\omega),\omega\big) \ \text{a.e.}\ \omega\in\Omega\right\}\\
&=\left\{h\in L^q_{\mathit{w}^*}(\Sigma_{t},\mu;E_{t+1}^*)\mid h(\omega)\in\partial^\circ_y\varphi_t\big(\bar{f}(\omega),\bar{g}(\omega),\omega\big) \ \text{a.e.}\ \omega\in\Omega\right\}.
\end{aligned}
\end{equation*}
It follows from the proof of Theorem~\ref{mt1} that
\begin{equation*}
\begin{aligned}
&\partial^\circ_x u_{t+1}(\bar{g},\gamma_{t+1}(\bar{g}))\\{}\subset
&\left\{h\in L^q_{\mathit{w}^*}(\Sigma_{t+1},\mu;E_{t+1}^*)\left|\begin{array}{r}h(\omega)\in\partial^\circ_x\varphi_{t+1}\big(\bar{g}(\omega),\gamma_{t+1}(\bar{g})(\omega),\omega\big)\\ \text{a.e.}\ \omega\in\Omega\end{array}\right.\right\}.
\end{aligned}
\end{equation*}
Furthermore, applying Theorem \ref{thm2} leads us to
$$
N^\circ(\bar{g};\Gamma_t(\bar{f}))=\left\{h\in L^q_{\mathit{w}^*}(\Sigma_t,\mu;E_{t+1}^*)\left|\begin{array}{r} h(\omega)\in N^\circ\big(\bar{g}(\omega);\Phi_t(\bar{f}(\omega),\omega)\big)\\ \text{a.e.\ $\omega\in\Omega$}\end{array}\right.\right\}.
$$
Employing finally Theorem~\ref{EI} in our setting ensures the validity of
$$
0\in\partial^\circ_y u_t(\bar{f},\bar{g})+\partial^\circ_x u_{t+1}\big(\bar{g},\gamma_{t+1}(\bar{g})\big)+N^\circ\big(\bar{g};\Gamma_t(\bar{f})\big),
$$
which clearly verifies the stochastic Euler inclusion \eqref{sei1}.
\end{proof}

\section{Concluding Remarks}\label{conc}
We conclude this paper with some comments and open research questions.\vspace*{0.05in}

$\bullet$ To the best of our knowledge, available sum rules for the limiting subdifferential are rather vague in the non-Asplund spaces $L^1(\Sigma_t,\mu;E_{t+1})$ and $L^\infty(\Sigma_t,\mu;E_{t+1})$. It strongly relates to the fact that subdifferential formulas of the type given in Proposition~\ref{prop3} are not currently established in the aforementioned spaces. Deriving such formulas is a major problem of the future research important for its own sake and for the purpose of applications to subdifferentiation of the value functions in stochastic dynamic programming.\vspace*{0.05in}

$\bullet$ It would be very important to extend Proposition~\ref{prop3} to the spaces $L^p(\mu,E)$ with $2\le p<\infty$, which are Asplund. This would open the gate to calculate the limiting subdifferential of the value functions considered above due to the availability of the comprehensive limiting subdifferential sum rules for broad classes of functions defined on Asplund spaces; see \cite{mo06}. In our stochastic DP setting we can apply the limiting subdifferential sum rules on the Asplund space $L^p(\Sigma_t,\mu;E_{t+1})$ with $2\le p<\infty$ whenever $E_t$ is an Asplund space for each $t\in\NN$. Observe to this end that an adequate extension of Theorem~\ref{thm2} to the limiting normal cone is also required. Note that in case of finite-dimensional spaces $E_t$ the desired formula has been recently obtained in \cite{mw18}. This allows us to establish counterparts of our results in such settings.\vspace{0.05in}

$\bullet$ In the derivation of the necessary optimality conditions given in Theorems~\ref{thm1}, \ref{EI}, \ref{mt1}, and \ref{mt2} we impose the regularity assumption on the (random) cost function. Although in most economic applications regularity is an innocuous assumption, especially in the convex settings, it is a rather strong requirement from the viewpoint of variational analysis and generalized differentiation. We plan to significantly relax it in our future research.

\clearpage


\begin{thebibliography}{99}
\bibitem[Aliprantis and Border(2006)]{ab06}
Aliprantis, C.\,D. and K.\,C. Border, \textit{Infinite Dimensional Analysis: A Hitchhiker's Guide}, 3rd edn., Springer, Berlin, 2006.

\bibitem[Arkin and Evstigneev(1987)]{ae87}
Arkin, V.\,I. and I.\,V. Evstigneev, \textit{Stochastic Models of Control and Economic Dynamics}, Academic Press, London, 1987.

\bibitem[Askri and Le Van(1998)]{alv98}
Askri, K. and C. Le Van, ``Differentiability of the value function of nonclassical optimal growth models'', \textit{J.\ Optim.\ Theory Appl.}\ \textbf{97} (1998), 591--604.

\bibitem[Aubin and Frankowska(1990)]{af90}
Aubin, J.-\,P. and H. Frankowska, \textit{Set-\hspace{0pt}Valued Analysis}, Birkh{\"a}user, Boston, 1990.

\bibitem[Benveniste and Scheinkman(1979)]{bs79}
Benveniste, L.\,M. and J.\,A. Scheinkman, ``On the differentiability of the value function in dynamic models of economics'', {\itshape Econometrica}\ \textbf{47} (1979), 727--732.

\bibitem[Bertsekas and Shreve(1978)]{bs78}
Bertsekas, D.\,P. and S.\,E. Shreve, \textit{Stochastic Optimal Control: The Discrete-Time Case}, Academic Press, New York, 1978.

\bibitem[Bewley(1981)]{be81}
Bewley, T.\,F., ``Stationary equilibrium'', \textit{J.\ Econom.\ Theory}\ \textbf{24} (1981), 265--295.

\bibitem[Bonnisseau and Le Van(1996)]{blv96}
Bonnisseau, J.-\hspace{0pt}M. and C. Le Van, ``On the subdifferential of the value function in economic optimization problems'', \textit{J.\ Math.\ Econom.}\ \textbf{25} (1996), 55--73.

\bibitem[Bourgin(1983)]{bo83}
Bourgin, R.\,D., \textit{Geometric Aspects of Convex Sets with the Radon--Nikod\'{y}m Property}, Lect.\ Notes Math.\ \textbf{993}, Springer, Berlin, 1983.

\bibitem[Chieu(2009)]{ch09}
Chieu, N.\,H., ``The Fr\'echet and limiting subdifferentials of integral functionals on the spaces $L_1(\Omega,E)$'', \textit{J.\ Math.\ Anal.\ Appl.}\ \textbf{360} (2009), 704--710.

\bibitem[Clarke(1983)]{cl83}
Clarke, F.\,H., \textit{Optimization and Nonsmooth Analysis}, John Wiley \& Sons, New York, 1983.

\bibitem[Dana(1974)]{da74}
Dana, R.-A., ``Evaluation of development programs in a stationary stochastic economy with bounded primary resources'', in: J. L{\o}s and M.\,W. L{\o}s, (eds.), \textit{Mathematical Models in Economics}, North-\hspace{0pt}Holland, Amsterdam, pp.\,179--205, 1974.

\bibitem[Diestel and Uhl(1977)]{du77}
Diestel, J. and J.\,J. Uhl, Jr., \textit{Vector Measures}, Amer.\ Math.\ Soc., Providence, 1977.

\bibitem[Evstigneev(1974)]{ev74}
Evstigneev, I., ``Optimal stochastic programs and their stimulating prices'', in: J. L{\o}s and M.\,W. L{\o}s, (eds.), \textit{Mathematical Models in Economics}, North-\hspace{0pt}Holland, Amsterdam, pp.\,219--252, 1974.

\bibitem[Fonseca and Leoni(2007)]{fl07}
Fonseca, I. and G. Leoni, \textit{Modern Methods in the Calculus of Variations: $L^p$ Spaces}, Springer, Berlin, 2007.

\bibitem[Giner(2017)]{gi17}
Giner, E., ``Clarke and limiting subdifferentials of integral functionals'', \textit{J.\ Convex Anal.}\ \textbf{24} (2017), 661--678.

\bibitem[Ioffe and Levin(1972)]{il72}
Ioffe, A.\,D. and V.\,L. Levin, ``Subdifferentials of convex functions'', \textit{Trans.\ Moscow Math.\ Soc.}\ \textbf{26} (1972), 1--72.

\bibitem[Jeanjean(1974)]{je74}
Jeanjean, P., ``Optimal development programs under uncertainty: The discounted case'', \textit{J.\ Econom.\ Theory} \textbf{7} (1974), 66--92.

\bibitem[McKenzie(1986)]{mc86}
McKenzie, L.\,W., ``Optimal economic growth: Turnpike theorems and comparative dynamics'', in: K.\,J. Arrow and M.\,D. Intriligator, (eds.), \textit{Handbook of Mathematical Economics, Volume III}, North-\hspace{0pt}Holland, Amsterdam, pp.\,1281--1355, 1986.

\bibitem[Mehlitz and Waschmuth(2018)]{mw18}
Mehlitz, P. and G. Waschmuth, ``The limiting cone to pointwise defined sets in Lebesgue spaces'', \textit{Set-Valued Var.\ Anal.}\ \textbf{26} (2018), 449--467.

\bibitem[Mordukhovich(2006)]{mo06}
Mordukhovich, B.\,S., \textit{Variational Analysis and Generalized Differentiations, I: Basic Theory, II: Applications}, Springer, Berlin, 2006.

\bibitem[Mordukhovich(2018)]{mo18}
Mordukhovich, B.\,S., \textit{Variational Analysis and Applications}, Springer, Cham, Switzerland, 2018.

\bibitem[Mordukhovich and Sagara(2018)]{ms18}
Mordukhovich, B.\,S. and N. Sagara, ``Subdifferentials of nonconvex integral functionals in Banach spaces with applications to stochastic dynamic programming'', \textit{J.\ Convex Anal.}\ \textbf{25} (2018), 643--673.

\bibitem[Papageorgiou(1994)]{pa94}
Papageorgiou, N.\,S., ``Optimal programs and their price characterization in a multisector growth model with uncertainty'', \textit{Proc.\ Amer.\ Math.\ Soc.}\ \textbf{122} (1994), 227--240.

\bibitem[Radner(1973)]{ra73}
Radner, R.,  ``Optimal stationary consumption with stochastic production and resources'', \textit{J.\ Econom.\ Theory} \textbf{6} (1973), 68--90.

\bibitem[Rinc\'on-\hspace{0pt}Zapatero and Santos(2009)]{rzs09}
Rinc\'on-\hspace{0pt}Zapatero, J.\,P. and M.\,S. Santos, ``Differentiability of the value function without interiority assumptions'', \textit{J.\ Econom.\ Theory} \textbf{144} (2009), 1948--1964.

\bibitem[Rockafellar and Wets(1998)]{rw98}
Rockafellar, R.\,T. and R.\, J-B.\, Wets, \textit{Variational Analysis}, Springer, Berlin, 1998.

\bibitem[Stokey and Lucas(1989)]{sl89}
Stokey, N.\,L. and R.\,E. Lucas, Jr., \textit{Recursive Methods in Economic Dynamics}, Harvard Univ. Press, Cambridge, Massachusetts, 1989.

\bibitem[Takekuma(1992)]{ta92}
Takekuma, S.-I.,  ``Optimal growth under uncertainty: A complete characterization of weakly maximal programs'', \textit{Hitotsubashi J.\ Econom.}\ \textbf{33} (1992), 169--182.

\bibitem[Yano(1989)]{ya89}
Yano, M., ``Comparative statics in dynamic stochastic models: Differential analysis of a stochastic modified golden rule state in a Banach space'', \textit{J.\ Math.\ Econom.}\ \textbf{18} (1989), 169--185.

\bibitem[Weitzman(1973)]{we73}
Weitzman, M.\,L., ``Duality theory for infinite horizon convex model'', \textit{Management Sci.}\ \textbf{19} (1973), 783--789.

\bibitem[Zilcha(1976)]{zi76}
Zilcha, I.,  ``Characterization by prices of optimal programs under uncertainty'', \textit{J.\ Math.\ Econom.}\ \textbf{3} (1976), 173--183.
\end{thebibliography}
\end{document}